\newif\ifpreprint
  \newtheorem{theorem}{Theorem}
  \newtheorem{lemma}[theorem]{Lemma}
  \newtheorem{definition}{Definition}
  \newcommand{\email}[1]{\protect\href{mailto:#1}{#1}}
\pgfplotsset{compat=newest} %
\pgfplotsset{table/search path={data},}
\newcommand{\imagunit}{\mathrm{i}}
\newcommand{\RNum}[1]{\uppercase\expandafter{\romannumeral #1\relax}}
\newcommand{\bszero}{\boldsymbol{0}} % vector of zeros
\newcommand{\bsh}{{\boldsymbol{h}}}    % vector h
\newcommand{\bsk}{{\boldsymbol{k}}}    % vector h
\newcommand{\bsg}{\boldsymbol{g}} 
\newcommand{\bstau}{\boldsymbol{\tau}}    % vector k
\newcommand{\bsp}{\boldsymbol{p}}    % vector l
\newcommand{\bsn}{{\boldsymbol{n}}} % vector l
\newcommand{\bsell}{\boldsymbol{\ell}}    % vector ell
\newcommand{\bsx}{\boldsymbol{x}}    % vector x
\newcommand{\bsX}{\boldsymbol{X}}    % vector X
\newcommand{\bsu}{\boldsymbol{u}}
\newcommand{\bsy}{\boldsymbol{y}}    % vector y
\newcommand{\bsz}{\boldsymbol{z}}    % vector z
\newcommand{\bsZ}{\boldsymbol{Z}}    % vector Z
\newcommand{\bsxi}{\boldsymbol{\xi}}
\newcommand{\rd}{\,\mathrm{d}}
\newcommand{\Z}{\mathbb{Z}}
\newcommand{\N}{\mathbb{N}}
\newcommand{\R}{\mathbb{R}}
\newcommand{\Q}{\mathbb{Q}}
\newcommand{\C}{\mathbb{C}}
\newcommand{\calA}{\mathcal{A}}
\newcommand{\tpmod}[1]{~(\operatorname{mod}{#1})} % a tiny pmod for under sum symbols
\newcommand*{\0}{\mathcal{O}} 
\newcommand{\twopii}{2\pi \imagunit\,}
\newcommand*{\T}{\mathbb{T}}
\newcommand{\rme}{\mathrm{e}}
\newcommand{\tsc}{\gamma} % time scaling
\DeclareMathOperator{\diag}{diag}
\newcommand{\Deltat}{{\Delta t}}
\newcommand{\Deltatp}{{(\Delta t)}}
\definecolor{darkred}{RGB}{139,0,0}
\definecolor{darkgreen}{RGB}{0,130,70}
\definecolor{darkmagenta}{RGB}{139,0,139}
\definecolor{darkorange}{RGB}{180,60,0}
\begin{document}

\title{Strang splitting in combination with rank-$1$ \\
and rank-$r$ lattices for the time-dependent Schrödinger equation}
\author{Yuya Suzuki\thanks{KU Leuven, Belgium (\email{yuya.suzuki@cs.kuleuven.be}, \email{dirk.nuyens@cs.kuleuven.be}).}
\and Gowri Suryanarayana\thanks{EnergyVille and VITO, Belgium (\email{gowri.suryanarayana@vito.be}).}
\and Dirk Nuyens\footnotemark[1]}

\maketitle

\begin{abstract}
We approximate the solution for the time dependent Schrödinger equation (TDSE) in two steps. We first use a pseudo-spectral collocation method that uses samples of the functions on rank-$1$ or rank-$r$ lattice points with unitary Fourier transforms.
We then get a system of ordinary differential equations in time, which we solve approximately by stepping in time using the Strang splitting method.
We prove that the numerical scheme proposed converges quadratically with respect to the time step size, given that the potential is in a Korobov space with the smoothness parameter greater than $9/2$.
Particularly, we prove that the required degree of smoothness is independent of the dimension of the problem. We demonstrate our new method by comparing with results using sparse grids from \cite{G07}, with several numerical examples showing large advantage for our new method and pushing the examples to higher dimensionality.
The proposed method has two distinctive features from a numerical perspective: (i) numerical results show the error convergence of time discretization is consistent even for higher-dimensional problems; (ii) by using the rank-$1$ lattice points, the solution can be efficiently computed (and further time stepped) using only $1$-dimensional Fast Fourier Transforms.
\end{abstract}

\section{Introduction}\label{sec:1}

Approximating the solution of the many-particle Schrödinger equation is a challenging problem,
where the dimension of the problem increases linearly with the number of particles in the system.
Many attempts have been made to break the curse of dimensionality with respect to this problem \cite{G07_1, G07,jahnke2000error}. This is also the focus of the present paper and we propose a numerical method which provides a partial solution to this.
Often in the context of physics, the time-dependent Schr\"{o}dinger equation (TDSE) is referred to as the following equation:
\begin{align*}
  \imagunit \, \hbar \, \frac{\partial \psi}{\partial t}(\bsx,t)
  &=
  -\frac{\hbar^2}{2m} \, \nabla^2 \psi(\bsx, t) + v(\bsx) \, \psi(\bsx, t),
\end{align*}%
where $\hbar$ is the reduced Planck constant and $m$ is the mass.
By scaling the time by $1/\sqrt{m}$ and setting $\tsc=\hbar/\sqrt{m}$ this is equivalent to the following form for which $\psi(\bsx,t) = u(\bsx,t/\sqrt{m})$.
We therefore consider the following equivalent equation in this paper (as was done in \cite{G07_1, G07,jahnke2000error}):
\begin{align}\label{TDSE} 
    \imagunit \, \tsc \, \frac{\partial u}{\partial t}(\bsx, t)
    &=
    -\frac{ \tsc^2}{2} \, \nabla^2 u(\bsx, t) + v{(\bsx)} \, u(\bsx, t)
    ,
\end{align}
with positions $\bsx \in \T^d = \T([0,1)^d)$, time $t \in [0,T]$, $\tsc = \hbar / \sqrt{m} > 0$ a small positive parameter, $\imagunit$ the imaginary unit and $\nabla^2$ is the Laplace operator w.r.t.\ the positions $\bsx$, i.e., 
$\nabla^2 = \sum_{i=1}^M \sum_{j=1}^D \partial^2 / \partial x_{i,j}^2$ where $M$ is the number of particles and $D$ is the physical dimensionality.
For notational simplicity we set $d = M \times D$.
The function $u(\bsx,t)$ is the wave function which we seek to approximate, $v(\bsx)$ the potential and $g(\bsx)$ the initial condition at time $t=0$; specific details about these functions will be covered in the later sections. In addition, the boundary conditions are assumed to be periodic. This periodic boundary makes the problem equivalent to identify the domain of $\bsx$ as the $d$-dimensional torus $\T^d = \T([0,1)^d) \simeq [0,1)^d$ with period~$1$.
The TDSE in the above form appears in quantum mechanics and molecular chemistry, and is general enough to include the case of the quantum-mechanical harmonic oscillator, see, e.g., \cite{bandrauk1992higher,Y2010}. We note that this form of equations can be interpreted in several ways: one-particle in $d$-dimensional space; multiple $d$ particles in one-dimensional space (e.g., \cite{MR0280103}); and the combination of those two (multiple particles in multi-dimensional space, e.g., \cite{MR3100763,Y2010}).

In \cite{jahnke2000error}, Jahnke and Lubich applied the Strang splitting method which is an operator splitting method, to approximate the solution of the TDSE where a collocation method using regular grids was first used to discretize the spatial dimensions of the initial wave function and the Strang splitting method was then applied to propagate the wave function in time.
In \cite{G07_1, G07}, sparse grids were used instead of regular grids to overcome the curse of dimensionality but with limited success. The numerical experiments on the TDSE were limited to dimension~$5$.

We are interested in using rank-$1$ lattices for function approximation. Lattice rules have traditionally been used for numerical integration of periodic functions, see, e.g., \cite{CN2008,Nuy2014,SJ1994}. 
Rank-$1$ lattice rules have been studied for the integration of functions belonging to smooth permutation invariant function spaces in \cite{NSW14}. This research is also relevant to our work, since a system with identical particles admits to the setting where (groups of) coordinates, i.e., per particle, are permutation invariant.
Additionally, lattice rules have been used for function approximation in recent years, e.g., \cite{KSW2006,KWW09}. A spectral collocation method using a rank-$1$ lattice was developed by \cite{fred} to approximate the solution of partial differential equations in a periodic space.
In addition to the periodic setting, rank-$1$ lattices, after an appropriate transformation, were found suitable for integration and approximation of non periodic functions from smooth half-period cosine spaces (which includes the usual Sobolev space with bounded mixed first derivatives), see respectively \cite{DNP2012} and \cite{cools2016tent,SNC2015}.

The above research motivates the use of rank-$1$ lattices for solving the TDSE where some symmetry is exhibited due to the physical nature, see \cite{Y2010}.
We derive a spectral collocation method based on rank-$1$ and more general rank-$r$ lattice rules. The general rank-$r$ lattice points also include the (possibly anisotropic) regular grids. The computation of the involved spectral coefficients can be efficiently calculated using unitary Fast Fourier Transformations (FFTs) owing to the special structure of lattice points. Further, we conduct the error analysis of the numerical scheme. Our focus is on the error coming from the time discretization.
The main theoretical result is that the error of the time-discretization converges with rate of $\0(\Deltatp^2)$ where $\Deltat$ is the discretization step of the time $t \in [0,T]$. Our analysis shows that the convergence rate requires some smoothness of the potential function $v(\bsx)$, but this smoothness does not depend on the dimension~$d$, where the results in \cite{G07}, where the collocation was done using sparse grids, need the smoothness to be higher when $d$ increases.
We provide numerical results in various settings, showing that the convergence rate against the time propagation is very stable and not affected by the dimension~$d$. 

The rest of this paper is organized as follows: Section~\ref{sec:2} describes our method and the corresponding theoretical results.
In Section~\ref{sec:3}, we demonstrate our method by numerical experiments with various values of parameters. Our numerical experiments contain both low-dimensional cases and high-dimensional cases.
In Section~\ref{sec:4}, an expression for the total error bound of the full discretization is given.
Section~\ref{sec:5} gives conclusions of this paper.

Throughout the paper $\Z$ denotes the set of all integers, $\Z_n := \{0,1,\dots,n-1 \}$ is the set of integers modulo $n$, $\N := \{1,2,\ldots\}$ the natural numbers and $\Q$ the rational numbers.
We use $I_n$ or just $I$ to denote the $n \times n$ identity matrix.

\section{The method}\label{sec:2}

In this section, we will describe the numerical method used for solving the TDSE. First, we introduce the key concepts that are required through out this paper: lattice point sets, the Fourier pseudo-spectral method on lattices, and the Strang splitting.

\subsection{Lattices}

The main building blocks of the proposed method are \emph{integration lattices}.
They are the intersection of a lattice $A\,\Z^d$ with the unit cube $[0,1)^d$ where $A \in \Q^{d\times r}$, $1 \le r \le d$, is a rational matrix and were originally proposed to approximate periodic integrals on $[0,1)^d$. For more detailed information we refer to \cite{CN2008,lyness1989introduction,SJ1994}.

For the main part of this paper we make use of a \emph{rank-$1$} lattice
\[
  \Lambda(\bsz, n)
  :=
  \left\{ \frac{\bsz k}{n} \bmod 1 \mathrel{:} k \in \Z \right\}
  ,
\]
which is completely defined by its integer \emph{generating vector} $\bsz \in \Z^d$ and the modulus $n$. We take the components of $\bsz$ relatively prime to $n$, such that the total number of points is $n$.

We derive theory for both rank-$1$ and rank-$r$ lattices, enabling us to state
all results for regular (anisotropic) grids  as well since they can be represented by rank-$r$ lattices.
We therefore introduce the definition of a rank-$r$ lattice, see \cite{SJ1994}.

\begin{definition}[Canonical form of rank-$r$ lattice]\label{def:rank-r-lattice}
A $d$-dimensional integration lattice can be written in terms of a generator
\[
  A
  =
  \begin{pmatrix}
    &  &  & \\
    \bsz_1/n_1 & \bsz_2/n_2 & \cdots & \bsz_r/n_r \\
    & & &
  \end{pmatrix}
  \in \Q^{d \times r}
  ,
\]
which is specified by the generating vectors $\bsZ = ( \bsz_1 , \ldots, \bsz_r ) \in \Z^{d \times r}$ and moduli $\bsn = (n_1, \ldots, n_r) \in \N^r$, such that $A =\bsZ \, \diag(\bsn)^{-1}$, with the corresponding lattice point set $\Lambda(\bsZ, \bsn)$ given by
\[
  \Lambda(\bsZ ,\bsn)
  :=
  \Bigl\{  A \bsk \bmod 1 \mathrel{:} \bsk \in \Z^r \Bigr\}
  \; \subset [0,1)^d
  .
\]
This form is the canonical form of a rank-$r$ lattice provided
the moduli satisfy $n_{i+1}$ divides $n_{i}$ for $i=1,\ldots,r-1$,
the generating vectors $\bsz_1, \ldots, \bsz_r \in \Z^d$ are linearly independent over the rational numbers and the components of each $\bsz_i$ are relatively prime to $n_i$.
Then $r$ is the minimum number of generating vectors needed to describe this lattice point set and its total number of unique points in the unit cube $[0,1)^d$ is $n = \prod_{i=1}^r n_i$.
\end{definition}

For further details we refer to \cite[Theorem~3.2]{SJ1994} and the related part there.
We interpret the collection of generating vectors $\bsZ = (\bsz_1,\ldots,\bsz_r) \in \Z^{d\times r}$ as a matrix where the generating vectors constitute the columns of the matrix.
The associated rank-$r$ ``lattice rule'' is the equal-weight cubature rule to approximate an integral of a function $f$ over the unit cube.
For a rank-$r$ lattice in its canonical form we can iterate over all points by a multiindex $\bsk \in \Z_{n_1} \oplus \cdots \oplus \Z_{n_r}$ and therefore the cubature rule based on this lattice point set can be written as
\[
  Q(f; \bsZ, \bsn)
  :=
  \frac1{n_1} \sum_{k_1=0}^{n_1-1} \cdots \frac1{n_r} \sum_{k_r=0}^{n_r-1}
    f\left(\left(\frac{\bsz_1 k_1}{n_1} + \cdots + \frac{\bsz_r k_r}{n_r}\right)\bmod{1}\right)
  .
\]
In this paper we will always assume that a rank-$r$ lattice is given in canonical form, i.e., $\bsZ$ and $\bsn$ satisfy the properties of Definition~\ref{def:rank-r-lattice},
and there is thus a one-to-one correspondence between the lattice points and the multiindex $\bsk \in \Z_{n_1} \oplus \cdots \oplus \Z_{n_r}$.
We also introduce an associated anti-aliasing index set for the rank-$r$ lattice $\Lambda(\bsZ,\bsn)$ which we will denote by $\calA(\bsZ,\bsn)$.
The anti-aliasing set is not unique.

\begin{definition}[Anti-aliasing set]\label{def:rank-r-antialiasing}
  An anti-aliasing set $\calA(\bsZ,\bsn) \in \Z^d$ associated with the rank-$r$ lattice $\Lambda(\bsZ,\bsn)$ in canonical form has the property that for all distinct vectors $\bsh, \bsh' \in \calA(\bsZ,\bsn)$ it never holds that
\[
  \bsZ^{\top} (\bsh-\bsh')
  \equiv
  \bszero
  \pmod{\bsn}
  \qquad
  \equiv
  \qquad
  \begin{cases}
     0 \pmod{n_1}, \\
     \vdots \hspace{3em} \vdots \\
     0 \pmod{n_r},
  \end{cases}
\]
where the equivalence is to be interpreted component-wise and $\bszero$ is the $r$-dimensional zero-vector.
\end{definition}

The anti-aliasing condition states that every $\bsh \in \calA(\bsZ,\bsn)$ can be associated with a unique multiindex $\bsxi \in \Z_{n_1} \oplus \cdots \oplus \Z_{n_r}$, similarly as how we iterate over the points of the rank-$r$ lattice.
Therefore the maximum size of $\calA(\bsZ,\bsn)$ is $n = \prod_{i=1}^r n_i$.
Furthermore, if $|\calA(\bsZ,\bsn)|=n$ we can divide $\Z^d$ into conjugacy classes with respect to $\calA(\bsZ,\bsn)$ in the following three ways
\newlength\sumd
\settowidth{\sumd}{$\scriptstyle \bsxi\in\Z_{n_1}\oplus\cdots\oplus\Z_{n_r}$}
\begin{align}
\begin{split}
  \Z^d
  &=
  \biguplus_{\makebox[\sumd]{$\scriptstyle \bsh \in \Lambda^\perp(\bsZ,\bsn)$}} \left( \bsh + \calA(\bsZ,\bsn) \right)
  \\
  &=
  \biguplus_{\makebox[\sumd]{$\scriptstyle \bsh \in \calA(\bsZ,\bsn)$}} \{ \bsh' \in \Z^d : \bsZ^\top\,\bsh' \equiv \bsZ^\top\,\bsh \pmod{\bsn} \}
  \\
  &=
  \biguplus_{\makebox[\sumd]{$\scriptstyle \bsxi\in\Z_{n_1}\oplus\cdots\oplus\Z_{n_r}$}} \{ \bsh \in \Z^d : \bsZ^\top\,\bsh \equiv \bsxi \pmod{\bsn} \}
  ,
  \end{split}
  \label{eq:disjunct}
\end{align}
where $\uplus$ means that all sets are disjunct.

This set in the rank-$1$ case, $\calA(\bsz,n)$, has been studied before, e.g., \cite{cools1996minimal,fred,cools2016tent,SNC2015}. It is sometimes also called a reconstructing rank-$1$ lattice, e.g., in \cite{byrenheid2016tight,kammerer2014reconstructing}.
By using the concept of the \emph{dual} of the lattice, defined by
\begin{align}
\label{eq:dual}
  \Lambda^\bot(\bsZ,\bsn)
  :=
  \{ \bsh \in \Z^d \mathrel{:} \bsZ^{\top}\,\bsh \equiv \bszero \pmod \bsn \}
  ,
\end{align}
where again the equivalence is to be interpreted component-wise with respect to the $\bsn = (n_1,\ldots,n_r)$,
an anti-aliasing set can be equivalently defined as a set for which for all distinct $\bsh,\bsh'\in \calA(\bsZ,\bsn)$ we have
\begin{align*}
  \bsh-\bsh' \notin \Lambda^\bot(\bsZ,\bsn)
  .
\end{align*}

In addition, we make extensive use of the \emph{character property} of a rank-$r$ lattice
which is given by
\begin{align}\label{char}
  \frac{1}{n}\sum_{\bsp\in\Lambda(\bsZ,\bsn)}\exp(\twopii (\bsh\cdot\bsp))
  =
  \begin{cases}
    1 & \text{if} \; \bsh\in\Lambda^\bot(\bsZ,\bsn), \\
    0 & \text{otherwise}.
  \end{cases}
\end{align}
This follows since
\[
\frac{1}{n}\sum_{\bsp\in\Lambda(\bsZ,\bsn)}\exp(\twopii (\bsh\cdot\bsp))
  =
\prod_{j=1}^r \frac{1}{n_j} \sum_{k_j\in\Z_{n_j}} \exp\left(\twopii (\bsh^\top \bsz_j)\frac{k_j}{n_j}\right),
\]
and
\[
\frac{1}{n_j} \sum_{k_j\in\Z_{n_j}} \exp\left(\twopii (\bsh^\top \bsz_j)\frac{k_j}{n_j}\right)
  =
  \begin{cases}
    1 & \text{if} \; \bsh^\top \bsz_j \equiv 0 \pmod{n_j}, \\
    0 & \text{otherwise}.
  \end{cases}
\]
The condition $\bsh^\top \bsz_j \equiv 0 \pmod{n_j}$ for all $j=1,...,r$ is equivalent to $\bsh \in \Lambda^\bot(\bsZ,\bsn)$, see \eqref{eq:dual}.

As an example of rank-$1$ lattice points and corresponding anti-aliasing set, we exhibit the case for $n=55$ and $\bsz^\top=(1,34)$ in Fig.~\ref{fig:latticeAset}.
Typically, psuedo-spectral Fourier methods use regular grids in the spatial domain, and the unitary Fourier transform maps these points to integer points in a hyper-rectangle in the frequency domain.
Our method uses rank-$r$ lattices instead  of regular grids in the spatial domain.
Similar to the typical pseudo-spectral Fourier methods, a unitary discrete Fourier transformation maps the lattice points to anti-aliasing integer point sets in the frequency domain, see Theorem~\ref{thm:properties}.

 \begin{figure} \label{fig:latticeAset}
 \centering
\includegraphics{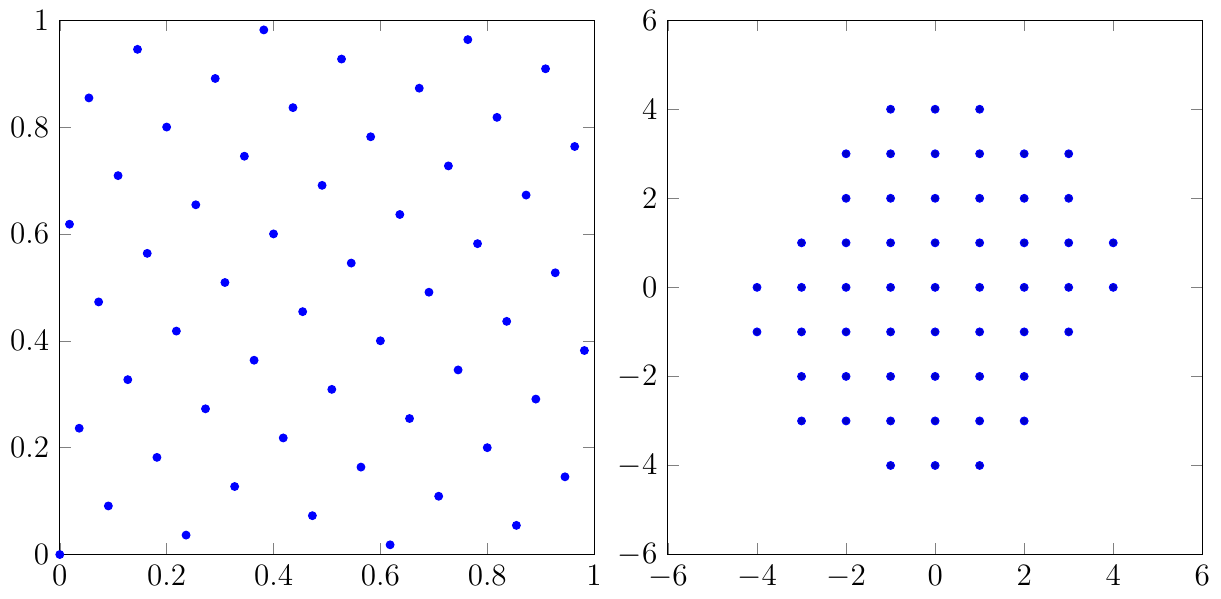}
 \caption{An example of rank-$1$ lattice and the corresponding anti-aliasing set with full cardinality, where $n=55$ and $\bsz^\top=(1,34)$.}
\end{figure}

\subsection{The Fourier pseudo-spectral method on lattice point sets}

A \emph{pseudo-spectral method} is a way to approximate solutions of partial differential equations in terms of a finite number of basis functions.
This was applied to approximate the solution of the TDSE in \cite{G07,jahnke2000error} by expanding all functions into Fourier series.
To apply the Fourier pseudo-spectral method, we require some properties.
The minimum requirement we need is that any considered function is continuous and its Fourier series converges point wise to the original function
\[
  f(\bsx)
  =
  \sum_{\bsh\in\Z^d} \widehat{f}(\bsh) \, \exp(\twopii \bsh\cdot \bsx),
  \qquad
  \text{for all } \bsx \in \T^d
  ,
\]
where the Fourier coefficients of $f$ are given by $\widehat{f}(\bsh)=\int_{[0,1]^d}f(\bsx)\exp(-\twopii\bsh\cdot\bsx)\rd \bsx$.
This condition is guaranteed if we assume that the Fourier coefficients of the function $f$ are absolutely summable
\begin{equation*}
  \|f\|_{A(\T^d)}
  :=
  \sum_{\bsh\in \Z^d} |\widehat{f}(\bsh)| < \infty
  .
\end{equation*}
The space of functions satisfying this condition is called the \emph{Wiener algebra} $A(\T^d)$.
For a detailed discussion, we refer to \cite[Section~3.3]{MR3243734}.

To assure that the solution $u(\bsx,t)\in A(\T^d)$, we have the following lemma
which makes use of the \emph{Korobov space}, a reproducing kernel Hilbert space of Fourier series with a certain decay
\[
  E_\alpha(\T^d)
  := 
  \left\{ f \in L_2(\T^d) \mathrel{:} 
    \|f\|^2_{E_\alpha(\T^d)}
    :=
    \sum_{\bsh\in \Z^d} |\widehat{f}(\bsh)|^2 \, r^2_{\alpha}(\bsh) 
    <
    \infty
  \right\},
\]
where
\begin{align}\label{eq:ralpha}
  r^2_{\alpha}(\bsh)
  :=
  \prod_{j=1}^d \max(|h_j|^{2\alpha},1)
  .
\end{align}
The parameter $\alpha > 1/2$, the smoothness parameter, determines the rate of decay of the Fourier coefficients. 
For $\alpha > 1/2$ we have $E_\alpha(\T^d) \subset A(\T^d)$.
This space is also referred to as a kind of unanchored periodic Sobolev space with dominating mixed-smoothness.
In particular when $\alpha \in \N$ the norm can be expressed in terms of derivatives.
Furthermore, if we define $(r^*_\alpha(\bsh))^2 := \prod_{j=1}^d (1 + |2\pi\,h_j|^{2\alpha}) \ge r^2_\alpha(\bsh)$, then the associated norm defined as above is always larger than for $r_\alpha$, and when $\alpha \in \N$ this norm then reads as
\[
  \|f\|^2_{E^*_\alpha(\T^d)}
  :=
  \sum_{\bsh\in \Z^d} |\widehat{f}(\bsh)|^2 \, (r^*_{\alpha}(\bsh))^2
  =
  \sum_{\bstau \in \{0,\alpha\}^d} \|D^{\bstau} f\|^2_{L_2(\T^d)}
  .
\]
In fact this could be used as an alternative norm throughout the paper.
For a detailed discussion about Korobov spaces, see \cite{NW08} and references therein.
To assure that the term $\nabla^2 u$ in~\eqref{TDSE} makes sense, we require $\alpha \ge 2$.
In the later section, this space plays an important role to prove the convergence of our proposed method.

\begin{lemma}[Regularity of solution and Fourier expansion]\label{lem:u-series}
Given the TDSE~\eqref{TDSE} with $v,g \in E_\alpha(\T^d)$ and $\alpha \ge 2$, then the solution $u(\bsx,t) \in E_\alpha(\T^d)$ for all finite $t \ge 0$ and therefore
\begin{align}\label{eq:u-series}
  u(\bsx,t)
  =
  \sum_{\bsh \in \Z^d} \widehat{u}(\bsh,t) \, \exp(\twopii \bsh \cdot \bsx)
  ,
\end{align}
with
\begin{equation}\label{eq:Fourier-ODE}
  \imagunit\,\tsc\, \widehat{u}' (\bsh,t)
  =
  2\pi^2\tsc^2 \;\|\bsh\|^2_2 \; \widehat{u} (\bsh,t) + \widehat{f}(\bsh,t),
\end{equation}
for all $\bsh\in\Z^d$, with $\widehat{u}'(\bsh,t) = (\partial/\partial t)\,\widehat{u}(\bsh, t)$ and $\widehat{f}(\bsh,t)$ the Fourier coefficients of $f(\bsx,t) := u(\bsx,t) \, v(\bsx)$.
\end{lemma}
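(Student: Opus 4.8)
The statement bundles two claims: the differential recursion \eqref{eq:Fourier-ODE} for the Fourier coefficients, and the invariance of $E_\alpha(\T^d)$ under the flow. The plan is to establish the regularity first in a form strong enough to justify the formal manipulations behind \eqref{eq:Fourier-ODE}, and then read off the recursion. For the latter, note that once we know $u(\cdot,t)\in E_\alpha(\T^d)\subset A(\T^d)$ the series \eqref{eq:u-series} converges pointwise, and since $\alpha\ge2$ the series obtained by applying $\nabla^2$ termwise converges absolutely as well. Using $\nabla^2\exp(\twopii\bsh\cdot\bsx)=-4\pi^2\|\bsh\|_2^2\exp(\twopii\bsh\cdot\bsx)$ and writing $f=uv$ with $\widehat{f}(\bsh,t)=\sum_{\bsm\in\Z^d}\widehat{u}(\bsm,t)\,\widehat{v}(\bsh-\bsm)$, I would substitute \eqref{eq:u-series} into \eqref{TDSE} and match the coefficient of each $\exp(\twopii\bsh\cdot\bsx)$. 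The term $-\tfrac{\tsc^2}{2}\nabla^2$ then contributes exactly the factor $2\pi^2\tsc^2\|\bsh\|_2^2$, which gives \eqref{eq:Fourier-ODE}; the admissibility of the termwise $t$-differentiation follows from the $E_\alpha$-valued continuity in $t$ established next.

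For the regularity I would treat \eqref{eq:Fourier-ODE} as a Duhamel (variation-of-constants) equation in the Hilbert space $E_\alpha(\T^d)$. Writing $\lambda_{\bsh}:=2\pi^2\tsc\,\|\bsh\|_2^2$, the free part acts diagonally as the multiplier $\widehat{u}(\bsh,t)\mapsto\exp(-\imagunit\lambda_{\bsh}t)\,\widehat{u}(\bsh,t)$; since $|\exp(-\imagunit\lambda_{\bsh}t)|=1$ and this symbol commutes with the weight $r_\alpha(\bsh)$, the free propagator is an isometry on $E_\alpha(\T^d)$. The mild form therefore reads
\[
  \widehat{u}(\bsh,t)
  =
  \exp(-\imagunit\lambda_{\bsh}t)\,\widehat{g}(\bsh)
  -\frac{\imagunit}{\tsc}\int_0^t \exp\!\bigl(-\imagunit\lambda_{\bsh}(t-s)\bigr)\,\widehat{f}(\bsh,s)\rd s ,
\]
and taking $E_\alpha$-norms, using the isometry property and $f=uv$, gives
\[
  \|u(\cdot,t)\|_{E_\alpha(\T^d)}
  \le
  \|g\|_{E_\alpha(\T^d)}
  +\frac1\tsc\int_0^t \|u(\cdot,s)\,v\|_{E_\alpha(\T^d)}\rd s .
\]
Granting that multiplication by $v$ is bounded on $E_\alpha(\T^d)$, Gr\"onwall's inequality yields $\|u(\cdot,t)\|_{E_\alpha(\T^d)}\le \|g\|_{E_\alpha(\T^d)}\exp\!\bigl(C_{\alpha,d}\|v\|_{E_\alpha(\T^d)}\,t/\tsc\bigr)$, finite for every finite $t\ge0$. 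The same contraction estimate on a short time interval gives local existence by Picard iteration, and because the problem is linear this a priori bound extends the solution globally, so that $u(\cdot,t)\in E_\alpha(\T^d)$ for all finite $t\ge0$.

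The main obstacle is thus the multiplication estimate $\|u\,v\|_{E_\alpha(\T^d)}\le C_{\alpha,d}\,\|u\|_{E_\alpha(\T^d)}\,\|v\|_{E_\alpha(\T^d)}$, i.e.\ that $E_\alpha(\T^d)$ is a Banach algebra for $\alpha>1/2$ (here comfortably $\alpha\ge2$). The natural starting point is the submultiplicativity of the weight in \eqref{eq:ralpha}: from $|h_j|\le 2\max(|h_j-m_j|,|m_j|)$ one obtains $r_\alpha(\bsh)\le 2^{\alpha d}\,r_\alpha(\bsm)\,r_\alpha(\bsh-\bsm)$ for every $\bsm\in\Z^d$, so that $r_\alpha(\bsh)\,|\widehat{uv}(\bsh)|$ is dominated by the convolution of the two $\ell_2$-sequences $r_\alpha|\widehat{u}|$ and $r_\alpha|\widehat{v}|$. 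Since a convolution of two $\ell_2$-sequences need not lie in $\ell_2$, this naive bound is not enough, and this is the delicate point. I would split the convolution according to whether $\bsm$ or $\bsh-\bsm$ carries the larger weight and, on each part, move the weight onto the dominant factor while estimating the remaining factor by Cauchy--Schwarz against $\sum_{\bsh}r_\alpha(\bsh)^{-2}$, a quantity that is finite \emph{precisely} when $\alpha>1/2$ (this is also the embedding $E_\alpha\subset A$ recorded above). Equivalently, one can prove the one-dimensional algebra property of $E_\alpha(\T)$ and tensorise over the $d$ coordinates, exploiting the product structure of $r_\alpha$. Either way the dimension enters only through the constant $2^{\alpha d}$ and never through the required smoothness, which is the feature the paper emphasises.
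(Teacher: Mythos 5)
Your proposal is correct in substance and rests on the same two pillars as the paper's proof --- the free (kinetic) propagator acts isometrically on $E_\alpha(\T^d)$ because its Fourier symbol has modulus one, and multiplication by $v$ is a bounded operator on $E_\alpha(\T^d)$ by the algebra property --- but you glue these together by a genuinely different mechanism. The paper writes $u(\cdot,t)=\rme^{(A+B)t}g$ with $Au=\imagunit\tfrac{\tsc}{2}\nabla^2 u$ and $Bu=-\tfrac{\imagunit}{\tsc}v\,u$, and bounds $\|\rme^{(A+B)t}\|_{E_\alpha(\T^d)\to E_\alpha(\T^d)}\le \rme^{\|B\|t}$ via the Lie--Trotter product formula, quoting the algebra constant $C_{d,\alpha}$ from the literature; you instead pass to the mild (Duhamel) formulation, move the norm under the integral, apply the algebra bound, and close with Gr\"onwall, arriving at the identical exponential estimate $\|u(\cdot,t)\|_{E_\alpha(\T^d)}\le\|g\|_{E_\alpha(\T^d)}\exp\bigl(C_{d,\alpha}\|v\|_{E_\alpha(\T^d)}\,t/\tsc\bigr)$. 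Your route is more elementary (no Trotter formula for unbounded generators) and more complete on one point: Picard iteration actually \emph{constructs} the solution in $E_\alpha(\T^d)$, whereas the paper tacitly assumes the semigroup $\rme^{(A+B)t}$ exists and only estimates its norm. Conversely, the paper's argument is shorter once the semigroup machinery and the cited algebra property are granted. The derivation of \eqref{eq:Fourier-ODE} by coefficient matching is the same in both.

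Two details in your write-up need repair, neither fatal. First, at $\alpha=2$ the termwise-differentiated series need not converge \emph{absolutely}: bounding $\sum_{\bsh}\|\bsh\|_2^2|\widehat{u}(\bsh,t)|$ by Cauchy--Schwarz against $\sum_{\bsh}\|\bsh\|_2^4/r_\alpha^2(\bsh)$ requires $\alpha>5/2$. However $\sum_{\bsh}\|\bsh\|_2^4|\widehat{u}(\bsh,t)|^2\le d^2\|u(\cdot,t)\|^2_{E_2(\T^d)}<\infty$, so the Laplacian series converges in $L_2$, which is all that coefficient comparison needs. Second, your first sketch of the algebra property --- splitting the convolution globally according to which of $r_\alpha(\bsm)$, $r_\alpha(\bsh-\bsm)$ is larger and moving the whole weight onto the dominant factor --- fails for the \emph{product} weight $r_\alpha$: taking $d=2$, $\bsh=(N,N)$, $\bsm=(N,0)$ gives $r_\alpha(\bsh)=N^{2\alpha}$ while $r_\alpha(\bsm)=r_\alpha(\bsh-\bsm)=N^{\alpha}$, so no constant multiple of the dominant weight dominates $r_\alpha(\bsh)$; this trick is specific to isotropic weights like $(1+\|\bsh\|_2^2)^{s/2}$. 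The split must instead be made coordinate-wise, which is exactly your alternative of proving the one-dimensional algebra property and tensorising over the $d$ coordinates; that is the standard argument behind the constant $C_{d,\alpha}=2^{d\alpha}(1+2\zeta(2\alpha))^{d/2}$ that the paper cites, and it preserves the feature you correctly emphasise: the dimension enters only through the constant, never through the required smoothness.
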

\begin{proof}
To prove that $u(\bsx,t) \in E_\alpha(\T^d)$, we first rewrite the TDSE \eqref{TDSE},
\[
    \frac{\partial}{\partial t} u(\bsx, t)
    =
    \imagunit \, \frac{ \tsc}{2} \, \nabla^2 u(\bsx, t) - \frac{\imagunit}{\tsc}v(\bsx) \, u(\bsx, t)
    =
    A u(\bsx, t) +  B u(\bsx, t),
\]
where $Au(\bsx, t)=\imagunit \, \frac{ \tsc}{2} \, \nabla^2u(\bsx, t)$ and $Bu(\bsx, t)=-\frac{\imagunit}{\tsc}v(\bsx)u(\bsx, t)$. We let $(\rme^{At})_{t \ge 0}$ and $(\rme^{Bt})_{t \ge 0}$ denote strongly continuous semigroups generated by $A$ and $B$ respectively.
We note that the solution of \eqref{TDSE}, then, can be written as $u(\bsx,t)  =  \rme^{(A + B) \, t}  \, g(\bsx) $.
Observe that $\rme^{A t} $ is unitary on the Korobov space $E_\alpha(\T^d)$, i.e., for any $t \ge 0$,
\[
 \|\rme^{A t} g(\bsx) \|^2_{E_\alpha(\T^d)}
 =
 \sum_{\bsh\in \Z^d} |\rme^{\frac{\gamma}{2} \imagunit \; \| \bsh \|^2_2 \; t} \; \widehat{g}(\bsh)|^2 \; r^2_{\alpha}(\bsh)
 =
 \sum_{\bsh\in \Z^d} |\widehat{g}(\bsh)|^2  r^2_{\alpha}(\bsh)
 =
 \|g(\bsx) \|^2_{E_\alpha(\T^d)}.
\]
Also we know that the Korobov space is an algebra (see \cite[Appendix~2]{MR2049868}) such that for any $f, g \in E_\alpha(\T^d)$ also their product is in $E_\alpha(\T^d)$,
\begin{align*}
  \|f \, g \|_{E_\alpha(\T^d)}
  \le 
  C_{d,\alpha} \, \|f\|_{E_\alpha(\T^d)} \, \|g\|_{E_\alpha(\T^d)}
  ,
\end{align*}
where the constant $C_{d,\alpha} = 2^{d\alpha} (1 + 2\zeta(2\alpha))^{d/2}$.
Hence this result holds for any $t \ge 0$ and $f(\bsx) = v(\bsx)$ and $g = u(\bsx, t)$.
By using the Lie--Trotter product formula for unbounded self-adjoint operators (see e.g., \cite[Theorem~6.4]{MR2953553}), we obtain the following bound:
\begin{align*}
\bigl\|\rme^{(A+B) t} u(\bsx,0)  \bigr\|_{E_\alpha(\T^d) }
&=
\Bigl\| \lim_{n\to \infty} \Bigl(\rme^{A \frac{t}{n}} \rme^{ B \frac{t}{n}} \Bigr)^n u(\bsx,0) \Bigr\| _{E_\alpha(\T^d)}\\
&\kern-5em \le
\lim_{n\to \infty} \Bigl(\| \rme^{A \frac{t}{n} } \|_{E_\alpha(\T^d)\to E_\alpha(\T^d)} \;\; \| \rme^{B \frac{t}{n}} \|_{E_\alpha(\T^d)\to E_\alpha(\T^d)}  \Bigr)^n  \|u(\bsx,0)\|_{E_\alpha(\T^d) } \\
& \kern-5em \le
\lim_{n\to \infty} \Bigl( \| \rme^{ B \frac{t}{n} } \|_{E_\alpha(\T^d)\to E_\alpha(\T^d)} \Bigr)^n \|u(\bsx,0)\|_{E_\alpha(\T^d) } \\ 
&\kern-5em \le
\lim_{n\to \infty} \Bigl( \rme^{ \|B\|_{E_\alpha(\T^d)\to E_\alpha(\T^d)} \frac{t}{n}  }    \Bigr)^n \|u(\bsx,0)\|_{E_\alpha(\T^d) }  \\
&\kern-5em \le
\rme^{ \frac{1}{\gamma} C_{d,\alpha} \| v \|_{E_\alpha(\T^d)} t}  \|u(\bsx,0)\|_{E_\alpha(\T^d) } ,
\end{align*}
 where $\|V\|_{X \to X} := \sup_{0 \ne u \in X} \|V(u)\|_X / \|u\|_X$ is the induced operator norm, and the above bound is finite for finite time $t$.
Thus we have $u(\bsx,t) \in E_\alpha(\T^d)$ for any finite $t > 0$.
\\
By expanding the left hand side of~\eqref{TDSE}, we have 
 \[
  \imagunit \, \tsc \, \frac{\partial u}{\partial t}
  =
  \imagunit \, \tsc \, \sum_{\bsh \in \Z^d} \widehat{u}'(\bsh,t) \, \exp(\twopii \bsh \cdot \bsx)
  .
\]
By also expanding the right hand side of~\eqref{TDSE}, we obtain
\[
  \imagunit \, \tsc \, \sum_{\bsh \in \Z^d} \widehat{u}'(\bsh,t) \, \exp(\twopii \bsh \cdot \bsx)
  =
  \sum_{\bsh \in \Z^d} \left( 2\pi^2\tsc^2 \;\|\bsh\|^2_2 \; \widehat{u} (\bsh,t) + \widehat{f}(\bsh,t) \right) \, \exp(\twopii \bsh \cdot \bsx).
\]
This holds for all $\bsx\in\T^d$, therefore by comparing each of the coefficients, we obtain
\begin{equation}
  \imagunit\,\tsc\, \widehat{u}' (\bsh,t)
  =
  2\pi^2\tsc^2 \;\|\bsh\|^2_2 \; \widehat{u} (\bsh,t) + \widehat{f}(\bsh,t),
\end{equation}
for all $\bsh\in\Z^d$.
\end{proof}

We approximate the Fourier series~\eqref{eq:u-series} using a rank-$r$ lattice $\Lambda(\bsZ,\bsn)$ and a corresponding well chosen anti-aliasing set $\calA(\bsZ,\bsn)$. Let the approximation of the solution be given by
\begin{align}\label{approx_u}
  u_a(\bsx,t)
  :=
  \sum_{\bsh\in \calA({\bsZ,\bsn})} \widehat{u}_a(\bsh,t) \, \exp(\twopii\bsh\cdot\bsx)
  ,
\end{align}
with the approximated coefficients calculated by a rank-$r$ lattice rule
\begin{align}\label{approx_uhat}
  \widehat{u}_a(\bsh,t)
  :=
  {\frac1n}
  \sum_{\bsp\in \Lambda({\bsZ,\bsn})}
    u(\bsp,t) \, \exp(-\twopii\bsh\cdot \bsp)
  .
\end{align}
We thus write $u_a(\bsx,t)$ and $\widehat{u}_a(\bsh,t)$ to denote the approximations to $u(\bsx,t)$ and $\widehat{u}(\bsh,t)$ respectively.
For notational simplification we fix the time $t$ and omit this argument in the remainder of this section.

We define the unitary discrete Fourier transform (DFT) to map an $r$-dimensional tensor $\bsx \in \C^{n_1\times\cdots\times n_r}$ to a similarly shaped tensor $\bsX \in \C^{n_1\times\cdots\times n_r}$ by the transform
\begin{align}\label{eq:Fnx}
  X_{\xi_1,\ldots,\xi_r}
  &=
  \frac1{\sqrt{n_1}} \sum_{k_1=0}^{n_1-1} \exp(-\twopii k_1 \xi_1 / n_1) \,
  \cdots
  \frac1{\sqrt{n_r}} \sum_{k_r=0}^{n_r-1} \exp(-\twopii k_r \xi_r / n_r) \,
  x_{k_1,\ldots,k_r}
  ,
\end{align}
for $\bsxi \in \Z^{n_1} \oplus \cdots \oplus \Z^{n_r}$ and with the obvious modification for $r=1$.
We define the unitary one-dimensional Fourier matrix and its inverse by
\begin{align}\label{eq:Fn}
  F_n
  &:=
  \frac{1}{\sqrt{n}}
  \Biggl( \exp(-\twopii \, k \xi / n) \Biggr)_{k=0}^{n-1}
  ,
  &
  F_n^{-1}
  &:=
  \frac{1}{\sqrt{n}}
  \Biggl( \exp(\twopii \, k \xi / n) \Biggr)_{\xi=0}^{n-1}
  ,
\end{align}
and the $r$-dimensional Fourier matrix of size $n_1 \times \cdots \times n_r$ as the tensor product $F_\bsn = \otimes_{i=1}^r F_{n_i}$.
We can then write $\bsX = F_\bsn \, \bsx$ for~\eqref{eq:Fnx}, and $\bsx = F_\bsn^{-1} \, \bsX$ when ``vectorizing'' the tensors in lexicographical ordering.
The fast implementation of transforming $\bsx$ into $\bsX$, as well as its inverse, in $\0(n \log n)$, where $n = \prod_{i=1}^r n_i$, is called the fast Fourier transform (FFT) and is well known (although the direction and the normalization vary from implementation to implementation).

In the next theorem we show how to use $r$-dimensional FFTs to map from a rank-$r$ lattice (in space) to a corresponding anti-aliasing set of full cardinality (in the frequency domain), and back.
Note that a regular grid would be represented as a lattice with $r=d$, and in this setting the usage of the $d$-dimensional FFT is well known.
The use of one-dimensional FFTs with a rank-$1$ lattice and a corresponding anti-aliasing set is also known, see, e.g., \cite{MR2479224,fred}.
We extend this for rank-$r$ lattices by using the $r$-dimensional FFT. The following theorem shows three essential properties which make use of the fact that $|\calA(\bsZ,\bsn)| = n$.

\begin{theorem}\label{thm:properties}
Given a rank-$r$ lattice point set $\Lambda(\bsZ,\bsn)$ in canonical form and a corresponding anti-aliasing set $\calA(\bsZ,\bsn)$ with $|\calA(\bsZ,\bsn)| = n$, the following properties hold.
\\[2mm]
(\textit{i}) (Dual character property)
Define the corresponding $d$-dimensional Dirichlet kernel by
\begin{align*}
  D_{\calA(\bsZ,\bsn)}(\bsx)
  :=
  \sum_{\bsh\in\calA(\bsZ,\bsn)} \exp(\twopii \bsh\cdot\bsx). 
\end{align*}
Then for any two lattice points $\bsp,\bsp' \in \Lambda(\bsZ,\bsn)$
\begin{align}
  \frac{1}{n} D_{\calA(\bsZ,\bsn)}(\bsp-\bsp')
  =
  \frac{1}{n}
  \sum_{\bsh\in\calA(\bsZ,\bsn)} \exp(\twopii \bsh\cdot(\bsp-\bsp'))
  =
  \delta_{\bsp,\bsp'},
\label{Diri2}
\end{align}
where $\delta_{\bsp,\bsp'}$ is the Kronecker delta function that is $1$ if $\bsp=\bsp'$ and $0$ otherwise.
\\[2mm]
(\textit{ii}) (Interpolation condition)
If $u_a$ is the approximation of a function $u \in A(\T^d)$ by truncating its Fourier series expansion to the anti-aliasing set $\calA(\bsZ,\bsn)$ and by calculating the coefficients by the rank-$r$ lattice rule, cfr.~\eqref{approx_u} and~\eqref{approx_uhat}, then for any $\bsp \in \Lambda(\bsZ,\bsn)$
\begin{equation}\label{interpolation}
  u_a(\bsp)
  =
  u(\bsp)
  .
\end{equation}
(\textit{iii}) (Mapping through FFT)
Define the $r$-dimensional tensors
\begin{align*}
  \bsu
  &:=
  \big( u(\bsp_{(k_1,\ldots,k_r)}) \big)_{k_1=0,\ldots,n_1-1,\ldots,k_r=0,\ldots,n_r-1}
  ,
  \\
  \bsu_a
  &:=
  \big( u_a(\bsp_{(k_1,\ldots,k_r)}) \big)_{k_1=0,\ldots,n_1-1,\ldots,k_r=0,\ldots,n_r-1}
  ,
  \\
  \widehat{\bsu}_a
  &:=
  \big( \widehat{u}_a(\bsh_{(\xi_1,\ldots,\xi_r)}) \big)_{\xi_1=0,\ldots,n_1-1,\ldots,\xi_r=0,\ldots,n_r-1}
  ,
\end{align*}
with $\bsp_\bsk = (\bsz_1 k_1 / n_1 + \cdots + \bsz_r k_r / n_r) \bmod{1} \in \Lambda(\bsZ,\bsn)$, and where $\bsh_{\bsxi} \in \calA(\bsZ,\bsn)$ is such that $\bsxi = (\bsh\cdot\bsz_1\bmod{n_1},\ldots,\bsh\cdot\bsz_r\bmod{n_r})$.
Then $\bsu = \bsu_a$ (by (ii)) is the collection of function values $u(\bsp)$ on the lattice points $\bsp \in \Lambda(\bsZ,\bsn)$ and $\widehat{\bsu}_a$ is the collection of Fourier coefficients $\widehat{u}_a(\bsh)$ (by using the lattice rule, cfr.~\eqref{approx_u} and~\eqref{approx_uhat}) on the anti-aliasing indices $\bsh \in \calA(\bsZ,\bsn)$.
The $r$-dimensional discrete Fourier transform and its inverse now maps tensors $\bsu_a \in \C^{n_1\times\cdots\times n_r}$ to tensors $\widehat{\bsu}_a \in \C^{n_1\times\cdots\times n_r}$ and back.
\end{theorem}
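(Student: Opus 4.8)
The plan is to prove the three properties in order: (ii) will follow at once from (i), and (iii) is a reformulation of (i) together with the definitions \eqref{approx_u}--\eqref{approx_uhat} in tensor notation. The whole argument hinges on the single fact that, because $|\calA(\bsZ,\bsn)| = n$, the map $\Phi\colon \bsh \mapsto (\bsz_1\cdot\bsh \bmod n_1, \ldots, \bsz_r\cdot\bsh \bmod n_r)$ is a \emph{bijection} from $\calA(\bsZ,\bsn)$ onto $\Z_{n_1}\oplus\cdots\oplus\Z_{n_r}$. Injectivity is precisely the anti-aliasing condition of Definition~\ref{def:rank-r-antialiasing}, and comparing the two cardinalities (both equal to $n$) promotes injectivity to a bijection; this is the correspondence already recorded in the third decomposition of \eqref{eq:disjunct}.

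For (i) I would write $\bsp = \bsp_\bsk$ and $\bsp' = \bsp_{\bsk'}$ for multiindices $\bsk,\bsk' \in \Z_{n_1}\oplus\cdots\oplus\Z_{n_r}$. Since $\bsh$ is integer, the integer parts introduced by the reduction modulo $1$ are invisible to the exponential, so
\[
  \bsh\cdot(\bsp-\bsp')
  \equiv
  \sum_{j=1}^r \frac{(\bsz_j\cdot\bsh)\,(k_j-k_j')}{n_j}
  \pmod 1,
\]
and the summand $\exp(\twopii\,\bsh\cdot(\bsp-\bsp'))$ factors as $\prod_{j=1}^r \exp(\twopii\,\xi_j(k_j-k_j')/n_j)$ with $\xi_j = \bsz_j\cdot\bsh \bmod n_j$. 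Replacing the sum over $\bsh\in\calA(\bsZ,\bsn)$ by the sum over $\bsxi = \Phi(\bsh)$ and factorizing yields a product of one-dimensional complete geometric sums $\tfrac1{n_j}\sum_{\xi_j=0}^{n_j-1}\exp(\twopii\,\xi_j(k_j-k_j')/n_j)$, each equal to $1$ when $k_j\equiv k_j'\pmod{n_j}$ and $0$ otherwise. As $k_j,k_j'\in\{0,\ldots,n_j-1\}$, this forces $\bsk=\bsk'$, hence $\bsp=\bsp'$, giving $\delta_{\bsp,\bsp'}$.

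Property (ii) is then immediate: substituting \eqref{approx_uhat} into \eqref{approx_u}, evaluating at $\bsp\in\Lambda(\bsZ,\bsn)$ and interchanging the two finite sums gives $u_a(\bsp) = \tfrac1n \sum_{\bsp'\in\Lambda(\bsZ,\bsn)} u(\bsp') \sum_{\bsh\in\calA(\bsZ,\bsn)} \exp(\twopii\,\bsh\cdot(\bsp-\bsp')) = \sum_{\bsp'\in\Lambda(\bsZ,\bsn)} u(\bsp')\,\delta_{\bsp,\bsp'} = u(\bsp)$ by (i); the hypothesis $u\in A(\T^d)$ serves only to make the point values $u(\bsp)$ well defined. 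For (iii), inserting $\bsp_\bsk$ into \eqref{approx_uhat} and reusing the same factorization rewrites
\[
  \widehat{u}_a(\bsh_\bsxi)
  =
  \frac1n \sum_{k_1=0}^{n_1-1}\cdots\sum_{k_r=0}^{n_r-1} u(\bsp_\bsk)\,\prod_{j=1}^r \exp(-\twopii\,\xi_j k_j/n_j),
\]
with $\bsxi=\Phi(\bsh)$; comparing with \eqref{eq:Fnx} this is exactly the tensor-product transform $F_\bsn=\otimes_{i=1}^r F_{n_i}$ of \eqref{eq:Fn} applied to the data tensor $\bsu$, up to the scalar relating the unitary normalization $1/\sqrt n$ of $F_\bsn$ to the $1/n$ of \eqref{approx_uhat}; concretely $\widehat{\bsu}_a = \tfrac1{\sqrt n}F_\bsn\,\bsu$ and, via the inverse, $\bsu_a=\sqrt n\,F_\bsn^{-1}\widehat{\bsu}_a$. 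The bijection $\Phi$ ensures that $\bsxi$ ranges over all of $\Z_{n_1}\oplus\cdots\oplus\Z_{n_r}$, so both tensors live in $\C^{n_1\times\cdots\times n_r}$ and the ordinary $r$-dimensional FFT realizes the map and its inverse.

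The main obstacle is concentrated entirely in (i), and in particular in using the hypothesis $|\calA(\bsZ,\bsn)|=n$ to convert the sum over the otherwise unstructured anti-aliasing set into a product of \emph{complete} one-dimensional geometric sums. Without full cardinality, $\Phi$ fails to be surjective, the inner sums are incomplete, and the clean collapse to a Kronecker delta is lost. Once this identity is secured, (ii) and (iii) are routine, the only lingering care being the bookkeeping of the $\sqrt n$ factor between the unitary FFT convention and the lattice-rule convention.
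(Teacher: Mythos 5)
Your proof is correct, and parts (\textit{ii}) and (\textit{iii}) follow the same route as the paper (substitute, swap sums, apply the delta; identify the lattice-rule coefficients with a discrete Fourier transform via the bijection). Part (\textit{i}), however, is genuinely different. The paper starts from the character property of the lattice, which gives orthogonality with the sum running over the \emph{lattice points}, writes that fact as the matrix identity $PMP^{*}=I_n$ with $P=\bigl(\exp(\twopii\,\bsh^{(\chi)}\cdot\bsp^{(\kappa)})\bigr)_{\chi,\kappa}$ and $M=\diag(1/n,\ldots,1/n)$, and then flips it by linear algebra: $P$ square and non-singular forces $P^{*}P=M^{-1}$, which is exactly the dual statement with the sum over the \emph{anti-aliasing set}. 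You instead prove the dual statement directly: the bijection $\Phi\colon\bsh\mapsto(\bsz_1\cdot\bsh\bmod n_1,\ldots,\bsz_r\cdot\bsh\bmod n_r)$ (injective by anti-aliasing, surjective by the cardinality count) lets you re-index the sum over $\calA(\bsZ,\bsn)$ as a sum over all of $\Z_{n_1}\oplus\cdots\oplus\Z_{n_r}$, which factorizes into complete one-dimensional geometric sums, each collapsing to a Kronecker delta in $k_j-k_j'$; the canonical-form correspondence $\bsk\leftrightarrow\bsp_{\bsk}$ then converts $\bsk=\bsk'$ into $\bsp=\bsp'$. Your computation is more elementary and self-contained (no appeal to matrix invertibility, and no need to first establish the point-side orthogonality), it makes transparent exactly where full cardinality enters (without it the geometric sums are incomplete and the collapse fails), and the same bijection-plus-factorization is then reused verbatim in (\textit{iii}), giving the argument a pleasing unity; you also track the $1/n$ versus $1/\sqrt{n}$ normalization between the lattice rule and the unitary FFT explicitly, which the paper only handles implicitly in its algorithm. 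What the paper's route buys in exchange is abstraction: it shows that \emph{any} square exponential matrix with (scaled) orthonormal rows automatically has orthonormal columns, so the dual property follows from the primal one for any point set and frequency set in such a biorthogonal relation, without invoking the product structure of the canonical form.
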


\begin{proof}
\mbox{}\\[2mm]
(\textit{i}) 
 The proof is based on \cite[Theorem~7.3]{MR1489255}.
 Remember that $n = \prod_{i=1}^r n_i$.
Now associate an arbitrary but fixed ordering such that we can enumerate the lattice points by $\bsp^{(\kappa)}$ for $\kappa=0,\ldots,n-1$.
Likewise, associate an arbitrary but fixed ordering such that we can enumerate the Fourier indices in the anti-aliasing set by $\bsh^{(\chi)}$ for $\chi = 0,\ldots,n-1$.
Then
\begin{equation}\label{Dirichlet}
  \frac{1}{n}\sum_{\kappa=0}^{n-1} \exp(\twopii \bsh^{(\chi)} \cdot \bsp^{(\kappa)}) \, \exp(-\twopii \bsh^{(\chi')} \cdot \bsp^{(\kappa)}) 
  = 
  \delta_{\chi,\chi'},
  \qquad \text{for all } \chi,\chi'=0,\ldots,n-1
  ,
\end{equation}
because of the character property~\eqref{char} and since $\bsh^{(\chi)}-\bsh^{(\chi')}\notin\Lambda^\bot(\bsZ,\bsn)$ for $\chi\neq\chi'$ because of the anti-aliasing condition.
We rewrite~\eqref{Dirichlet} as
\[
  PMP^{*}
  =
  I_n
  ,
\]
where $M = \diag(1/n,\ldots,1/n)$ and
\begin{align*}
  P
  &=
  {\begin{pmatrix}
    \exp(\twopii \bsh^{(\chi)}\cdot\bsp^{(\kappa)})
  \end{pmatrix}_{\substack{\chi = 0, \ldots, n-1 \\ \kappa = 0, \ldots, n-1 }}}
 \\
  &=
  \begin{pmatrix}
    \exp(\twopii \bsh^{(0)}\cdot\bsp^{(0)})   & \cdots &\exp(\twopii \bsh^{(0)}\cdot\bsp^{(n-1)})  \\
    \vdots & \ddots & \vdots \\
    \exp(\twopii \bsh^{(n-1)}\cdot\bsp^{(0)}) & \cdots& \exp(\twopii \bsh^{(n-1)}\cdot\bsp^{(n-1)})
  \end{pmatrix}
  , 
\end{align*}

with $P^{*}$ the Hermitian conjugate of $P$. 
We note that once~\eqref{Dirichlet} holds, then the matrix $P$ is non-singular. Therefore we obtain
\[
  P^{*} P
  =
  M^{-1}
  ,
\]
which can be written as
\[
  \frac1n
  \sum_{\chi=0}^{n-1} \exp(-\twopii \bsh^{(\chi)} \cdot \bsp^{(\kappa)}) \exp(\twopii \bsh^{(\chi)} \cdot \bsp^{(\kappa')})
  =
  \delta_{\kappa,\kappa'}
  ,
  \qquad \text{for all } \kappa,\kappa'=0,\ldots,n-1
  ,
\]
which is equivalent to~\eqref{Diri2}.
\\[2mm]
(\textit{ii}) From~\eqref{approx_u} and~\eqref{approx_uhat} it follows
\begin{align*}
\begin{split}
  u_a(\bsp) 
  &=
  \sum_{\bsh\in \calA(\bsZ,\bsn)}\widehat{u}_a(\bsh) \, \exp(\twopii\bsh\cdot\bsp) \\
  &=
  \sum_{\bsh\in \calA(\bsZ,\bsn)}\left( \frac{1}{n}\sum_{\bsp'\in \Lambda(\bsZ,\bsn)}u(\bsp') \, \exp(-\twopii \bsh\cdot \bsp')\right) \exp(\twopii\bsh\cdot \bsp) \\
  &=
  \sum_{\bsp'\in \Lambda(\bsZ,\bsn)}u(\bsp') \; \frac{1}{n}\sum_{\bsh\in \calA(\bsZ,\bsn)} \exp(-\twopii \bsh\cdot \bsp') \, \exp(\twopii\bsh\cdot \bsp) \\
  &=
  \sum_{\bsp'\in \Lambda(\bsZ,\bsn)}u(\bsp') \, \delta_{\bsp, \bsp'}\\
  &=
  u(\bsp),
\end{split}
\end{align*}
where the dual character property~\eqref{Diri2} is used for the second to last equality.
\\[2mm]
(\textit{iii})
Consider approximating the Fourier coefficient $\widehat{u}(\bsh)$ by the rank-$r$ lattice rule,
\[
  \widehat{u}_a(\bsh)
  =
  \frac1{n_1} \sum_{k_1=0}^{n_1-1} \cdots \frac1{n_r} \sum_{k_r=0}^{n_r-1}
    u(A\bsk \bmod{1}) \, \exp(-\twopii \bsh^\top A \bsk)
  .
\]
Now define the $r$-dimensional function $v(k_1/n_1,\ldots,k_r/n_r) := u(A\bsk \bmod{1})$, then we can identify the above equation with
\begin{multline*}
  \widehat{u}_a(\bsh)
  =
  \frac1{n_1} \sum_{k_1=0}^{n_1-1} \cdots \frac1{n_r} \sum_{k_r=0}^{n_r-1}
  v(k_1/n_1,\ldots,k_r/n_r) \,
  \prod_{j=1}^r \exp(-\twopii (\bsh\cdot\bsz_j) \, k_j / n_j)
  \\
  =
  \widehat{v}(\bsh\cdot\bsz_1 \bmod{n_1}, \ldots, \bsh\cdot\bsz_r \bmod{n_r})
  ,
\end{multline*}
where $\widehat{v}(\xi_1,\ldots,\xi_r)$ are the discrete Fourier coefficients of $v$.
Now because of the anti-aliasing condition we can identify each $\bsh \in \calA(\bsZ,\bsn)$ uniquely with an index $\bsxi \in \Z_{n_1} \oplus \cdots \oplus \Z_{n_r}$ through $(\bsh\cdot\bsz_1 \bmod{n_1}, \ldots, \bsh\cdot\bsz_r \bmod{n_r}) = (\xi_1, \ldots, \xi_r)$.
Therefore the transformation is an $r$-dimensional $n_1 \times \cdots \times n_r$ discrete Fourier transform.
\end{proof}

Finally we show the relation between the approximated coefficients $ \widehat{u}_a(\bsh)$ and the coefficients $\widehat{u}(\bsh)$.
The approximated coefficients would be exact in case the function $u$ is solely supported on the anti-aliasing set $\calA(\bsZ,\bsn)$, but in general this is not the case and we will have aliasing errors.

\begin{lemma}[Aliasing]\label{lem:aliasing}
  The approximated Fourier coefficients~\eqref{approx_uhat} through the lattice rule $\Lambda(\bsZ,\bsn)$ alias the true Fourier coefficients in the following way
  \[
    \widehat{u}_a(\bsh)
    =
    \sum_{\bsh' \in \Lambda^{\bot}(\bsZ,\bsn)} \widehat{u}(\bsh+\bsh')
    =
    \widehat{u}(\bsh) + \sum_{\bszero \ne \bsh' \in \Lambda^{\bot}(\bsZ,\bsn)} \widehat{u}(\bsh+\bsh')
    .
  \]
\end{lemma}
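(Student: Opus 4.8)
The plan is to substitute the pointwise Fourier expansion of $u$ into the lattice-rule definition~\eqref{approx_uhat} and then collapse the resulting lattice sum by means of the character property~\eqref{char}. First I would write, for each lattice point $\bsp \in \Lambda(\bsZ,\bsn)$, the expansion $u(\bsp) = \sum_{\bsh'' \in \Z^d} \widehat{u}(\bsh'') \exp(\twopii \bsh'' \cdot \bsp)$, which is legitimate precisely because $u \in A(\T^d)$ guarantees pointwise convergence of its Fourier series. Plugging this into~\eqref{approx_uhat} produces a double sum, one over the frequency index $\bsh'' \in \Z^d$ and one over the finite set of lattice points $\bsp \in \Lambda(\bsZ,\bsn)$.

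Next I would interchange the two summations, pulling the finite lattice sum inside, so that
\[
  \widehat{u}_a(\bsh)
  =
  \sum_{\bsh'' \in \Z^d} \widehat{u}(\bsh'') \,
  \frac{1}{n} \sum_{\bsp \in \Lambda(\bsZ,\bsn)} \exp(\twopii (\bsh'' - \bsh) \cdot \bsp)
  .
\]
The inner sum is exactly the character sum of~\eqref{char}, so it equals $1$ when $\bsh'' - \bsh \in \Lambda^{\bot}(\bsZ,\bsn)$ and vanishes otherwise. Hence only those frequencies $\bsh''$ that are congruent to $\bsh$ modulo the dual lattice survive.

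Finally I would change the summation variable to $\bsh' := \bsh'' - \bsh$, which ranges over all of $\Lambda^{\bot}(\bsZ,\bsn)$, giving $\widehat{u}_a(\bsh) = \sum_{\bsh' \in \Lambda^{\bot}(\bsZ,\bsn)} \widehat{u}(\bsh + \bsh')$. Splitting off the term $\bsh' = \bszero$, which lies in $\Lambda^{\bot}(\bsZ,\bsn)$ since $\bsZ^{\top} \bszero \equiv \bszero \pmod{\bsn}$, then yields the second stated form.

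The only genuinely delicate point is the interchange of summation. Because the lattice sum is finite, this is justified as soon as the double series is absolutely summable, and this is exactly what membership of $u$ in the Wiener algebra $A(\T^d)$ supplies: the bound $\sum_{\bsh''} |\widehat{u}(\bsh'')| < \infty$ dominates the modulus of every term uniformly in $\bsp$, since each exponential factor has modulus one. I expect this to be the main obstacle worth flagging, and it is mild here; everything else is a direct, mechanical application of the character property.
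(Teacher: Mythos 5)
Your proposal is correct and follows essentially the same route as the paper's proof: expand $u(\bsp)$ in its Fourier series (valid since $u \in A(\T^d)$), interchange the summations, and collapse the inner lattice sum via the character property~\eqref{char}. Your explicit justification of the interchange through absolute summability in the Wiener algebra is a welcome bit of care that the paper leaves implicit, but it does not change the argument.
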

\begin{proof}
This follows from a straightforward calculation:
\begin{align*}
  \widehat{u}_a(\bsh)
  &=
  \frac{1}{n}\sum_{\bsp\in\Lambda(\bsZ,\bsn)} u(\bsp) \, \exp(-\twopii\bsh\cdot\bsp)
  \\
  &=\frac{1}{n}\sum_{\bsp\in\Lambda(\bsZ,\bsn) }\sum_{\bsh' \in \Z^d} \widehat{u}(\bsh') \,\exp(\twopii \bsh' \cdot \bsp) \, \exp(-\twopii\bsh\cdot\bsp)
  \\
  &=
  \sum_{\bsh' \in \Z^d}\widehat{u}(\bsh') \; \frac{1}{n} \sum_{\bsp\in\Lambda(\bsZ,\bsn)} \exp(\twopii(\bsh'-\bsh)\cdot\bsp)
  \\
  &=
  \sum_{\bsh' \in \Lambda^{\bot}(\bsZ,\bsn)} \widehat{u}(\bsh+\bsh')
  ,
\end{align*} 
where the character property~\eqref{char} is used in the last equality. 
\end{proof}

This last lemma shows that for the approximation $u_a$ to be meaningful the Fourier coefficients necessarily have to decay at a certain rate such that the error in the approximation $\widehat{u}_a(\bsh)$ can be bounded.
This decay is not enforced by asking $u \in A(\T^d)$, but it is enforced by asking $u$ to be in the Korobov space $E_\alpha(\T^d)$.

\subsection{Strang splitting}

We will use Strang splitting to do time stepping on our discretized function.
The idea of Strang splitting \cite{MR0235754} is to break up the solution operator for an ODE which consists of a sum of two differential operators into applying them each separately in a way to be specified below and thereby achieving second order convergence with respect to the time step.
Strang splitting can be applied to initial value problems of the form
\[
  y'(t) = (A + B) \, y(t)
  ,
  \qquad y(0) = y_0,
\]
where $A$ and $B$ are differential operators.

We first explain a splitting method which attains first order convergence in the time step and then show the Strang operator splitting which gives second order convergence in the time step.
If $A$ and $B$ are constant coefficient matrices, as is the case in our application, then the solution is given by
\[
  y(t) = \rme^{(A + B) \, t} \, y_0
  ,
\]
where $\rme^{(A + B)}$ is the matrix exponential.
If $A$ and $B$ commute, i.e., $A B = B A$, then $y(t) = \rme^{A \, t} \, \rme^{B \, t} \, y_0$.
This follows from the Baker--Campbell--Hausdorff formula from Lie group analysis
\[
  \log( \rme^{A\,t} \, \rme^{B\,t} )
  =
  (A + B) \, t + [A,B] \, \frac{t^2}{2} 
  + \left([A,[A,B]]+[B,[B,A]]\right) \, \frac{t^3}{12} 
  + \cdots
  ,
\]
where the commutator of two operators $A$ and $B$ is defined by $[A, B] := A B - B A$,
which reduces to $\rme^{A\,t} \, \rme^{B\,t} = \rme^{(A+B)\,t}$ if $[A,B]=0$, where $0$ should be interpreted as the zero matrix.
If $[A,B]$ is nonzero then, given an initial solution $y(t)$, we can write
\[
  y(t+\Deltat)
  =
  \rme^{(A + B) \, \Deltat} \, y(t)
  =
  \exp\left( \log( \rme^{A\, \Deltat} \, \rme^{B\, \Deltat} ) - [A,B] \, \frac{\Deltatp^2}{2} - \0(\Deltatp^3) \right) \, y(t)
  ,
\]
which for a discrete time stepping scheme $y_t \approx y(t)$ can be used to show a global error of first order in $\Deltat$ for bounded (fixed) commutator.

For Strang operator splitting we first write $A + B = \tfrac12 B + A + \tfrac12 B$, we now want to approximate $\rme^{(A+B)\,t}$ by $\rme^{\frac12 B \, t} \rme^{A \, t} \rme^{\frac12 B \, t}$, in effect taking twice half a time step for $B$ and sandwiching a full time step for $A$ in the middle.
The Strang splitting method for a time discretization $\Deltat$ then operates as follows
\begin{align}\label{eq:Strang-scheme}
  y_{k+1}
  =
  \rme^{\frac12 B \, \Deltat} \, \rme^{A \, \Deltat} \, \rme^{\frac12 B \, \Deltat}
  \, y_k
  ,
\end{align}
where $y_k \approx y(k \Deltat)$ and $y_0 = y(0)$ is the initial value.
We have the following local error bound (per time step) from \cite[Theorem~2.1]{jahnke2000error}.

\begin{theorem}[Strang splitting local error bounds]\label{thm:strang-local-error}
Let $X$ be a Banach space equipped with the norm $\|\cdot\|$, $A$ the generator of the strongly continuous semigroup $\rme^{A\,t}$ on the Banach space $X$, and $B$ be a bounded linear operator on $X$
with induced operator norm $\|B\| := \sup_{0 \ne y \in X} \|B\,y\| / \|y\| < \infty$.
Let $\omega$ be an arbitrary constant.
Then the following hold.
\\[2mm]
(\textit{i}) If there exist constants $\alpha\geq0$ and $c_1$ satisfying
\[
  \|[A,B] \, y\|
  \le
  c_1 \, \|(A+\omega\,I)^{\alpha} \, y\|
  \qquad \text{for all } y\in X
  , 
\]
then the local error of the Strang splitting method is bounded as follows
\[
  \left\|
    \rme^{\frac12 B\,\tau} \, \rme^{A \, \tau} \, \rme^{\frac12 B \, \tau} y
    -
    \rme^{(A+B)\,\tau} \, y
  \right\|
  \le
  C_1 \, \tau^2 \, \|(A+\omega\,I)^{\alpha} \, y \|
  \qquad \text{for all } y\in X
  ,
\]
where $C_1$ depends only on $c_1$ and $\|B\|$.
\\[2mm]
(\textit{ii}) Under the condition above and additionally if there exist constants $\beta \geq 1 \geq \alpha$, and $c_2$ satisfying 
\[
  \|[A,[A,B]]\,y\|
  \le
  c_2 \, \|(A+\omega\,I)^{\beta}y\|
  \qquad \text{for all } y\in X
  ,
\]
then the local error of the Strang splitting method is bounded as follows
\[
  \left\|
    \rme^{\frac12 B\,\tau} \, \rme^{A \, \tau} \, \rme^{\frac12 B \, \tau} y
    -
    \rme^{(A+B)\,\tau} \, y
  \right\|
  \le
  C_2 \, \tau^3 \, \|(A+\omega\,I)^{\beta} \, y\|
  \qquad \text{for all } y\in X
\]
where $C_2$ depends only on $c_1$, $c_2$ and $\|B\|$.
\end{theorem}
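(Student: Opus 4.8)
The plan is to represent the local error exactly through a Duhamel (variation-of-constants) formula and then to extract the relevant commutators by differentiating the split flow, crucially keeping the unbounded generator $A$ sandwiched inside the bounded semigroups $\rme^{A\cdot}$ and $\rme^{\frac12 B\cdot}$ so that its unboundedness is never exposed. Write $S(\tau) := \rme^{\frac12 B\tau}\,\rme^{A\tau}\,\rme^{\frac12 B\tau}$ and let $\mathcal D(\tau) := S'(\tau) - (A+B)\,S(\tau)$ be the defect. Since $S(0)=I$ and $\tfrac{\rd}{\rd s}\bigl[\rme^{(A+B)(\tau-s)}S(s)\bigr] = \rme^{(A+B)(\tau-s)}\mathcal D(s)$, integrating in $s$ from $0$ to $\tau$ gives the exact identity
\[
  S(\tau) - \rme^{(A+B)\tau} = \int_0^\tau \rme^{(A+B)(\tau-s)}\,\mathcal D(s)\,\rd s .
\]
Because $B$ is bounded, $A+B$ again generates a strongly continuous semigroup and $\|\rme^{(A+B)t}\|$ is bounded on $[0,\tau]$, so the whole problem reduces to estimating $\|\mathcal D(s)\,y\|$.

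Next I would compute $\mathcal D$ explicitly. Differentiating $S$ and using that $B$ commutes with $\rme^{\frac12 B\tau}$, the bare generator terms cancel pairwise and leave only commutators,
\[
  \mathcal D(\tau) = [\rme^{\frac12 B\tau},A]\,\rme^{A\tau}\rme^{\frac12 B\tau} + \tfrac12\,\rme^{\frac12 B\tau}\,[\rme^{A\tau},B]\,\rme^{\frac12 B\tau}.
\]
Each bracket is itself an integral of a single commutator: $[\rme^{A\tau},B] = \int_0^\tau \rme^{A(\tau-s)}[A,B]\rme^{As}\,\rd s$, and analogously $[\rme^{\frac12 B\tau},A]=\int_0^{\tau/2}\rme^{B(\tau/2-s)}[B,A]\rme^{Bs}\,\rd s$. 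Hence $\mathcal D(\tau)=\0(\tau)$ and involves only the single commutator $[A,B]$. For part (\textit{i}) this already suffices: bounding $\|[A,B]\,z\|\le c_1\|(A+\omega I)^\alpha z\|$ for the relevant arguments $z$ (namely $y$ acted on by the bounded semigroup factors), and using that $\rme^{A\cdot}$ commutes with $(A+\omega I)^\alpha$ while both semigroups are bounded, yields $\|\mathcal D(s)\,y\|\le C\,s\,\|(A+\omega I)^\alpha y\|$. Inserting this into the Duhamel identity and integrating produces the claimed $C_1\,\tau^2\,\|(A+\omega I)^\alpha y\|$ bound.

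For part (\textit{ii}) the key point is the symmetry of Strang splitting: the two first-order contributions to $\mathcal D(s)$, namely $\tfrac{s}{2}[B,A]$ from the first term and $\tfrac{s}{2}[A,B]$ from the second, cancel exactly, so that $\mathcal D(s)=\0(s^2)$. Expanding the two commutator integrals one order further exposes the $s^2$-coefficient as a fixed linear combination of the double commutators $[A,[A,B]]$ and $[B,[B,A]]$. I would then bound $[A,[A,B]]$ directly by hypothesis (\textit{ii}), and bound $[B,[B,A]] = B[B,A]-[B,A]B$ using boundedness of $B$ together with hypothesis (\textit{i}); here the requirement $\beta\ge 1\ge\alpha$ is exactly what lets one pass $(A+\omega I)^\alpha$ past the bounded factor $B$ and absorb everything into $\|(A+\omega I)^\beta y\|$. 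This gives $\|\mathcal D(s)\,y\|\le C\,s^2\,\|(A+\omega I)^\beta y\|$, and the Duhamel integral then yields the $C_2\,\tau^3\,\|(A+\omega I)^\beta y\|$ bound.

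The hard part will not be the algebra but the functional-analytic bookkeeping forced by the unboundedness of $A$: every Duhamel identity and every differentiation of $S$ must first be justified on a suitable dense core of $\mathrm{dom}\bigl((A+\omega I)^\beta\bigr)$ and then extended by density, and---more delicately---one must control the fractional powers $(A+\omega I)^\alpha$ as they are commuted through the bounded semigroups $\rme^{\frac12 B\cdot}$ and through $B$ itself. This is precisely where boundedness of $B$ and the calibrated exponents $\beta\ge 1\ge\alpha$ enter: they guarantee that operators such as $(A+\omega I)^\alpha B\,(A+\omega I)^{-\alpha}$ are bounded, so that all commutator estimates can ultimately be referred back to $\|(A+\omega I)^\beta y\|$. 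Keeping $A$ inside the semigroups throughout is what makes these estimates uniform in the small step $\tau$.
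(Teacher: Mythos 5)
First, a framing remark: the paper does not prove this theorem at all --- its ``proof'' is the citation of Jahnke--Lubich \cite[Theorem~2.1]{jahnke2000error} --- so your attempt has to be measured against that cited proof. Your high-level architecture (backward Duhamel identity with the defect $\mathcal D(s)=S'(s)-(A+B)S(s)$, the integral representations $[\rme^{A\tau},B]=\int_0^\tau \rme^{A(\tau-s)}[A,B]\,\rme^{As}\rd s$, and the symmetric cancellation of the two $\0(s)$ contributions so that the $s^2$ coefficient is built from $[A,[A,B]]$ and $[B,[B,A]]$) is sound in outline, and it is genuinely different from Jahnke--Lubich: they instead iterate the \emph{forward} Duhamel formula for $\rme^{\tau(A+B)}$, expand the outer factors $\rme^{\frac12 B\tau}$ in powers of $B$, and read the leading difference as the Peano-kernel (quadrature) error of the trapezoidal rule applied to $g(s)=\rme^{(\tau-s)A}B\,\rme^{sA}y$; one derivative of $g$ produces $[B,A]$ and the factor $\tau^2$, two derivatives produce $[A,[A,B]]$ and $\tau^3$.

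This difference in arrangement is not cosmetic; it is exactly where your gap lies. In your defect, every commutator acts on vectors of the form $\rme^{B\sigma}(\cdots)y$ (first term) or $\rme^{A(s-\sigma)}\rme^{\frac12 Bs}y$ (second term), so your estimates require $\|(A+\omega I)^{\gamma}\rme^{Bt}z\|\le C\,\|(A+\omega I)^{\gamma}z\|$, i.e.\ boundedness of $(A+\omega I)^{\gamma}B(A+\omega I)^{-\gamma}$, with $\gamma=\alpha$ already in part (\textit{i}) and $\gamma=\beta$ in part (\textit{ii}). Your justification --- that boundedness of $B$ together with $\beta\ge1\ge\alpha$ guarantees this --- is false: a bounded operator on $X$ need not map $D((A+\omega I)^{\gamma})$ into itself, and $(A+\omega I)^{\gamma}B(A+\omega I)^{-\gamma}$ can be unbounded. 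Where such commutation is available at all, it comes from the commutator hypothesis itself, e.g.\ $(A+\omega I)B(A+\omega I)^{-1}=B+[A,B](A+\omega I)^{-1}$ is bounded because hypothesis (\textit{i}) with $\alpha\le1$ makes $[A,B](A+\omega I)^{-1}$ bounded, combined with moment inequalities/interpolation for fractional exponents. In particular, your proof of part (\textit{i}) collapses for $\alpha>1$, which the theorem explicitly allows ($\alpha\ge0$ is arbitrary there; $\alpha\le 1$ is imposed only in part (\textit{ii})). The same root problem makes your defect ill-defined as written: $S'(s)y$ contains the term $\rme^{\frac12 Bs}A\,\rme^{As}\rme^{\frac12 Bs}y$, which requires $\rme^{\frac12 Bs}y\in D(A)$, and boundedness of $B$ does not give you that. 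The Jahnke--Lubich arrangement is designed to avoid precisely this: at first order the commutator only ever hits $\rme^{sA}y$, with which $(A+\omega I)^{\alpha}$ commutes, so part (\textit{i}) holds for every $\alpha\ge0$; compositions of the type $(A+\omega I)^{\alpha}B$ appear only in the second-order terms, and that is exactly where the hypothesis $\beta\ge1\ge\alpha$ (not the boundedness of $B$) is invoked.
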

\begin{proof}
  See \cite[Theorem~2.1]{jahnke2000error}.
\end{proof}

Now assume $\widehat{\bsu}_t$ are the approximate Fourier coefficients of $u(\bsx, t)$ at time $t$.
The previous theorem shows that we need a bound on $\|(A+\omega\, I)^\alpha \, \widehat{\bsu}_t \|_2$ and that $\|B\|_2$ should be bounded to get first order convergence for the global error of the time stepping scheme using the Strang splitting method.
It also shows that if we have a bound on $\|(A+\omega I)^\beta \, \widehat{\bsu}_t \|_2$ we obtain second order convergence for the global error of the time stepping scheme.

In Lemma~\ref{lem:rank-1-commutator-bounds} we will first derive the key ingredient for our main result when the discretization in space is done by a rank-$1$ lattice $\Lambda(\bsz,n)$ with corresponding finite Fourier series on an associated anti-aliasing set $\calA(\bsz,n)$.
In Lemma~\ref{lem:rank-r-commutator-bounds} we will extend the result for general rank-$r$ lattices $\Lambda(\bsZ,\bsn)$ which include any regular (possibly anisotropic) grid.

\subsection{Strang splitting and rank-$1$ lattices}

Denote by $\widehat{\bsu}_t := \bigl( \widehat{u}_a(\bsh^{(0)},t),$ $\dots,\widehat{u}_a(\bsh^{(n-1)},t) \bigr)$ the approximated solution at time $t$ using a fixed anti-aliasing set $\calA(\bsz,n) = \{ \bsh_\xi \mathrel{:} \xi = 0, \ldots, n-1\}$ of full size $n$, where $\bsh_\xi \in \calA(\bsz,n)$ is such that $\bsh_\xi \cdot \bsz \equiv \xi \pmod{n}$. 
Demanding that~\eqref{eq:Fourier-ODE} holds for all $\bsh\in\calA(\bsz,n)$, we have the following relation
\begin{align}\label{ode}
  \imagunit\,\tsc\, \widehat{\bsu}_t'
  &=
  \frac{1}{2}\tsc^2D_n\widehat{\bsu}_t+W_n\widehat{\bsu}_t,
\end{align}
with the initial condition $\widehat{\bsu}_{0} =\widehat{\bsg}_a:=(\widehat{g}_a(\bsh^{(0)}),\dots,\widehat{g}_a(\bsh^{(n-1)}))$,
\begin{align}\label{eq:Dn}
  D_n
  :=
  \diag\left((4\pi^2\|\bsh_\xi\|_2^2)_{\xi=0,\ldots,n-1}\right)
  ,
\end{align}
and the linear operator $W_n:=F_n V_n F_n^{-1}$ with 
\begin{align}\label{eq:Vn}
  V_n
  :=
  \diag\left( \left( v(\bsp_k) \right)_{k=0,\ldots,n-1} \right)
  ,
\end{align}
where $F_n$ is the unitary Fourier matrix.
For the derivation of $W_n$, we have the following Lemma.
\begin{lemma}[Multiplication operator on rank-$1$ lattices]
Given a rank-$1$ lattice point set $\Lambda(\bsz, n)$ and corresponding anti-aliasing set $\calA(\bsz,n)$ of full size, a potential function $v \in E_\alpha(\T^d)$ with $\alpha \ge 2$ and a function $u_a \in E_\beta(\T^d)$ with $\beta \ge 2$ with Fourier coefficients only supported on $\calA(\bsz,n)$.
Then the action in the Fourier domain restricted to $\calA(\bsz,n)$ of multiplying with $v$, that is $f_a(\bsx) = v(\bsx) \, u_a(\bsx)$, on the nodes of the rank-$1$ lattice, and with $f_a$ having Fourier coefficients restricted to the set $\calA(\bsz,n)$, can be described by a circulant matrix $W_n \in \C^{n\times n}$ with $W_n = F_n \, V_n \, F_n^{-1}$, with $V_n$ given by~\eqref{eq:Vn} and $F_n$ the unitary Fourier matrix~\eqref{eq:Fn}, where the element at position $(\xi,\xi')$ of $W_n$ is given by
\begin{align}\label{eq:Wn-elements}
  w_{\xi,\xi'}
  &
  =
  w_{(\xi-\xi') \bmod{n}}
  =
  \sum_{\substack{\bsh\in\Z^d \\ \bsh\cdot \bsz\equiv \xi-\xi'\tpmod{n}}}\kern-2em \widehat{v}(\bsh)
  .
\end{align}
\end{lemma}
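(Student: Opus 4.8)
The plan is to recognise the composite operation in the statement as a chain of three linear maps and to represent each as a matrix acting on the coefficient vector $\widehat{\bsu}_a = (\widehat{u}_a(\bsh_\xi))_{\xi=0}^{n-1}$. Reading the steps in order: first pass from the coefficients supported on $\calA(\bsz,n)$ to the nodal values $\bsu_a = (u_a(\bsp_k))_{k=0}^{n-1}$, which is the inverse of the unitary transform of Theorem~\ref{thm:properties}(iii); accounting for the $1/n$ of the lattice rule in~\eqref{approx_uhat} against the $1/\sqrt{n}$ of the unitary $F_n$, this reads $\bsu_a = \sqrt{n}\,F_n^{-1}\widehat{\bsu}_a$. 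Second, multiply pointwise by the potential on the nodes, $f_a(\bsp_k)=v(\bsp_k)\,u_a(\bsp_k)$, which is exactly the diagonal matrix $V_n$ of~\eqref{eq:Vn}. Third, map the resulting nodal values back to coefficients on $\calA(\bsz,n)$ by $\widehat{\bsf}_a=\tfrac{1}{\sqrt{n}}F_n\bsf_a$. Composing the three maps, the scalar factors $\sqrt{n}$ and $1/\sqrt{n}$ cancel and one obtains $\widehat{\bsf}_a = F_n V_n F_n^{-1}\widehat{\bsu}_a$, which identifies the operator as $W_n = F_nV_nF_n^{-1}$.

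It then remains to compute the entries and read off the circulant structure. I would evaluate $(W_n)_{\xi,\xi'} = \sum_{k=0}^{n-1}(F_n)_{\xi,k}\,v(\bsp_k)\,(F_n^{-1})_{k,\xi'}$ directly from~\eqref{eq:Fn}, collecting the exponentials into $\tfrac1n\sum_{k} v(\bsp_k)\exp(-\twopii k(\xi-\xi')/n)$. Inserting the Fourier series $v(\bsp_k)=\sum_{\bsh\in\Z^d}\widehat{v}(\bsh)\exp(\twopii(\bsh\cdot\bsz)k/n)$ (using $\bsp_k=\bsz k/n \bmod 1$ so that $\exp(\twopii\bsh\cdot\bsp_k)=\exp(\twopii(\bsh\cdot\bsz)k/n)$ for integer $\bsh$), interchanging the finite $k$-sum with the series over $\bsh$, and applying the one-dimensional character sum $\tfrac1n\sum_{k=0}^{n-1}\exp(\twopii k(\bsh\cdot\bsz-(\xi-\xi'))/n)$, which equals $1$ when $\bsh\cdot\bsz\equiv\xi-\xi'\tpmod{n}$ and vanishes otherwise, collapses the expression to the sum of $\widehat{v}(\bsh)$ over all $\bsh\in\Z^d$ with $\bsh\cdot\bsz\equiv\xi-\xi'\tpmod{n}$. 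Since this depends on $\xi,\xi'$ only through $(\xi-\xi')\bmod n$, the matrix $W_n$ is circulant with entries $w_{(\xi-\xi')\bmod n}$, which is precisely~\eqref{eq:Wn-elements}.

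The main obstacle I expect is bookkeeping rather than deep analysis: keeping the normalisation transparent so that no stray power of $n$ survives in $W_n$, and justifying the interchange of the $k$-sum with the Fourier series of $v$. The latter is legitimate because $\alpha\ge 2 > 1/2$ gives $v\in E_\alpha(\T^d)\subset A(\T^d)$, so $\sum_{\bsh}|\widehat{v}(\bsh)|<\infty$ and the product of a finite sum with an absolutely convergent series may be reordered by Fubini. I would also note that the identification $\bsh_\xi\cdot\bsz\equiv\xi\tpmod{n}$ from the anti-aliasing correspondence is what reduces the $d$-dimensional transform of Theorem~\ref{thm:properties}(iii) to the single one-dimensional $F_n$ used here, and that the hypotheses $|\calA(\bsz,n)|=n$ and $u_a$ supported exactly on $\calA(\bsz,n)$ make the passage $\widehat{\bsu}_a\leftrightarrow\bsu_a$ an exact bijection, so that $W_n$ \emph{faithfully} represents multiplication by $v$ on the retained modes.
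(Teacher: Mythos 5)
Your proof is correct, but it runs in the opposite direction from the paper's own argument. The paper works entirely on the function side: it expands the exact Fourier coefficients of the product as the discrete convolution $\widehat{f}(\bsh)=\sum_{\bsh'\in\calA(\bsz,n)}\widehat{v}(\bsh-\bsh')\,\widehat{u}_a(\bsh')$ (using that $u_a$ is supported on $\calA(\bsz,n)$), and then invokes the aliasing Lemma~\ref{lem:aliasing} to replace $\widehat{v}(\bsh-\bsh')$ by the lattice-rule coefficients $\widehat{v}_a(\bsh-\bsh')=\sum_{\bsell\in\Lambda^\bot(\bsz,n)}\widehat{v}(\bsh-\bsh'+\bsell)$, which are exactly the entries $w_{\xi-\xi'}$ of~\eqref{eq:Wn-elements}; the factorization $W_n=F_nV_nF_n^{-1}$ and the normalization bookkeeping are left implicit. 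You instead start from the matrix side: you justify the factorization as the composition of evaluation on the nodes ($\sqrt{n}\,F_n^{-1}$), pointwise multiplication ($V_n$), and the lattice rule ($F_n/\sqrt{n}$), and then compute the entries of $F_nV_nF_n^{-1}$ from scratch by inserting the Fourier series of $v$ at the lattice points and applying the one-dimensional character sum --- in effect re-deriving the aliasing lemma for $v$ rather than citing it. Both arguments rest on the same mechanism, the character property~\eqref{char}, so the computational core is shared; what your version buys is an explicit verification of every claim in the statement (the factorization, the cancellation of the $\sqrt{n}$ factors between~\eqref{approx_uhat} and the unitary $F_n$ of~\eqref{eq:Fn}, and the circulant structure), together with an explicit Fubini justification via $v\in E_\alpha(\T^d)\subset A(\T^d)$, all of which the paper leaves to the reader; what the paper's version buys is brevity, since it reuses Lemma~\ref{lem:aliasing} as a black box and stops at the coefficient relation.
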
%
\begin{proof}
In the following, $F_n$ is the unitary discrete Fourier transformation matrix. We denote the coefficients of the product $v(\bsx)u(\bsx)$ by $\widehat{f}(\bsh)$. For each $\bsh\in\calA(\bsz,n)$ we have
\begin{align*}
 \widehat{f}(\bsh)
 &=
 \int_{[0,1]^d} u_a(\bsx)v(\bsx) \exp(-\twopii \bsh\cdot\bsx )\rd \bsx \\
 &=
 \int_{[0,1]^d} \Biggl( \sum_{\bsh' \in \calA(\bsz,n)} \widehat{u}_a(\bsh') \exp(\twopii \bsh'\cdot\bsx) \Biggr)
 \\
 &\hspace{10em}\times \Biggl( \sum_{\bsh'' \in \Z^d}\widehat{v}(\bsh'') \exp(\twopii \bsh''\cdot\bsx) \Biggr)  \exp(-\twopii \bsh\cdot\bsx )\rd \bsx
 \\
 &= \sum_{\bsh'\in\calA(\bsz,n)} \widehat{v}(\bsh-\bsh') \, \widehat{u}_a(\bsh').
 \end{align*} 
By Lemma~\ref{lem:aliasing} The coefficients calculated on the rank-$1$ lattice points have the form
\[
\widehat{v}_a(\bsh-\bsh')=\sum_{\bsell\in \Lambda^{\bot}(\bsz,n)}\widehat{v}(\bsh-\bsh'+\bsell).
\] 
Therefore we have the following approximation for $\widehat{f}(\bsh)$:
\[
\widehat{f_a} (\bsh)=\sum_{\bsh'\in\calA(\bsz,n)} \sum_{\bsell\in \Lambda^{\bot}(\bsz,n)}\widehat{v}(\bsh-\bsh'+\bsell)\widehat{u}_a(\bsh',t).
\]
We have hence proved the claims of the lemma.
\end{proof}

The exact solution of the ordinary differential equation~\eqref{ode} is
\[
  \widehat{\bsu}_{t}
  =
  \rme^{-\frac{\imagunit}{\tsc} W_n \, t - \frac{\imagunit\tsc}{2} D_n \, t}
  \, \widehat{\bsu}_{0}.
\]
Applying the Strange splitting method~\eqref{eq:Strang-scheme}
then gives us
\begin{equation}
  \widehat{\bsu}_a^{k+1}
  =
  \rme^{-\frac{\imagunit}{2\tsc} W_n \Deltat}\,\rme^{-\frac{\imagunit\tsc}{2} D_n \Deltat}\,\rme^{-\frac{\imagunit}{2\tsc} W_n\Deltat} \, \widehat{\bsu}_a^k
  \qquad \text{for } k=0,1,\dots,m-1,
\label{StrangTDSE}
\end{equation}
where  
\[
  \rme^{-\frac{\imagunit}{2} W_n \Deltat} 
  = 
  F_n \diag\left( (\rme^{-\frac{\imagunit}{2} v(\bsp_k) \Deltat})_{k=0,\ldots,n-1} \right) F_n^{-1}.
\]
We then
approximate the solution of the differential system at time $t=k\Deltat$ by stepping with a time step of $\Deltat$ iteratively.

To bound the error of the Strang splitting we need to bound the effect of the commutators as specified in Theorem~\ref{thm:strang-local-error}, for this we will make use of the Korobov space.
Now we are ready to state our key theoretical result, namely that the Strang splitting has bounded error of the time evolution when the discretization in space is done by a rank-$1$ lattice rule and the truncation of the Fourier series is done on an associated anti-aliasing set.
First we show that the commutators of the operators $D=\frac{\tsc}{2}  D_n$ and $W=\frac{1}{\tsc}W_n$ are bounded in the sense of Theorem~\ref{thm:strang-local-error} with $A=D$, $B=W$ and $\omega=1$.

\begin{lemma}[Rank-$1$ lattice commutator bounds]\label{lem:rank-1-commutator-bounds}
Given a rank-$1$ lattice with generating vector $\bsz \in \Z^d$ and modulus $n$ and a TDSE with a potential function $v \in E_\alpha(\T^d)$ with $\alpha \ge 2$ and an initial condition $g \in E_\beta(\T^d)$ with $\beta \ge 2$.
Let $D = \tfrac{\tsc}2 D_n$ and $W = \frac1\tsc W_n$ with $D_n$ and $W_n = F_n V_n F_n^{-1}$ as defined in~\eqref{eq:Dn} and~\eqref{eq:Wn-elements}, and with $V_n$ as defined in~\eqref{eq:Vn} using the potential function $v$.

If the anti-aliasing set $\calA(\bsz,n) = \{ \bsh_\xi \in \Z^d : \bsh_\xi \cdot \bsz \equiv \xi \pmod{n} \text{ for } \xi = 0,\ldots,n-1 \}$, with full cardinality, is chosen such that it has minimal $\ell_2$ norm, i.e.,
\begin{align}\label{eq:hrepresenter}
  \| \bsh_\xi \|_2
  =
  \min_{\bsh' \in A(\bsz,n,\xi)} \|\bsh'\|_2
  ,
\end{align}
with
\begin{align*}
  A(\bsz,n,\xi)
  :=
  \bigl\{ \bsh \in \Z^d : \bsh \cdot \bsz \equiv \xi \pmod{n} \bigr\}
  ,
\end{align*}
then the following hold.
\\[2mm]
(\textit{i}) If $v \in E_\alpha(\T^d)$ with parameter $\alpha > 5/2$, then for all $\bsy \in \R^n$ we have
\begin{align*}
 &\|[D,W]\,\bsy\|_2 \le c_1 \, \|(D+I)\,\bsy\|_2,
 \end{align*}
 where $c_1$ is a constant independent of $n$ and $\bsy$.
\\[2mm] 
(\textit{ii}) If $v \in E_\alpha(\T^d)$ with $\alpha > 9/2$, then for all $\bsy \in \R^n$ we have
 \begin{align*}
 &\|[D,[D,W]]\,\bsy\|_2 \le c_2 \, \|(D+I)^2\,\bsy\|_2,
\end{align*}
where $c_2$ is a constant, independent of $n$ and $\bsy$. 
\end{lemma}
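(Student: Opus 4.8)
The plan is to use the explicit entrywise structure of the two commutators and reduce each bound to an operator-norm estimate handled by the Schur test. Writing $d_\xi := 2\pi^2\tsc\,\|\bsh_\xi\|_2^2$ for the $\xi$-th diagonal entry of $D$ and $W_{\xi,\xi'} = \tsc^{-1}w_{(\xi-\xi')\bmod n}$ for the entries of $W$, the diagonality of $D$ gives at once that $[D,W]$ has entries $(d_\xi - d_{\xi'})\,W_{\xi,\xi'}$ and $[D,[D,W]]$ has entries $(d_\xi - d_{\xi'})^2\,W_{\xi,\xi'}$. I would then factor $[D,W] = M_1\,(D+I)$ and $[D,[D,W]] = M_2\,(D+I)^2$, where
\[
  (M_1)_{\xi,\xi'} := \frac{(d_\xi - d_{\xi'})\,W_{\xi,\xi'}}{d_{\xi'}+1},
  \qquad
  (M_2)_{\xi,\xi'} := \frac{(d_\xi - d_{\xi'})^2\,W_{\xi,\xi'}}{(d_{\xi'}+1)^2}.
\]
Since $\|(D+I)\bsy\|_2$ and $\|(D+I)^2\bsy\|_2$ appear on the right-hand sides, both claims reduce to showing $\|M_1\|_{2\to2}\le c_1$ and $\|M_2\|_{2\to2}\le c_2$ with constants independent of $n$.

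The geometric heart of the argument, and the only place where the minimal-$\ell_2$-norm choice \eqref{eq:hrepresenter} of the anti-aliasing set is essential, is the estimate
\[
  \bigl|\,\|\bsh_\xi\|_2 - \|\bsh_{\xi'}\|_2\,\bigr| \le \|\bsh_\eta\|_2,
  \qquad \eta := (\xi-\xi')\bmod n,
\]
where $\bsh_\eta\in\calA(\bsz,n)$ is the minimal-norm representer of the class $\eta$. This holds because $\bsh_{\xi'}+\bsh_\eta$ represents $\xi$ and $\bsh_\xi+\bsh_{(-\eta)\bmod n}$ represents $\xi'$, so minimality yields $\|\bsh_\xi\|_2\le\|\bsh_{\xi'}\|_2+\|\bsh_\eta\|_2$ together with its symmetric counterpart. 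Combined with $\|\bsh_\xi\|_2^2-\|\bsh_{\xi'}\|_2^2=(\|\bsh_\xi\|_2-\|\bsh_{\xi'}\|_2)(\|\bsh_\xi\|_2+\|\bsh_{\xi'}\|_2)$ and the elementary bound $2\|\bsh_{\xi'}\|_2/(2\pi^2\tsc\|\bsh_{\xi'}\|_2^2+1)\le C_\tsc$, this gives the key factor estimate
\[
  \frac{|d_\xi - d_{\xi'}|}{d_{\xi'}+1} \le C_\tsc\,\|\bsh_\eta\|_2^2,
\]
which depends on $(\xi,\xi')$ only through $\eta$. Hence $|(M_1)_{\xi,\xi'}|\le C_\tsc\,\|\bsh_\eta\|_2^2\,|w_\eta|/\tsc$ and $|(M_2)_{\xi,\xi'}|\le C_\tsc^2\,\|\bsh_\eta\|_2^4\,|w_\eta|/\tsc$.

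Because these bounds depend only on $\eta=(\xi-\xi')\bmod n$, the row sums and column sums in the Schur test coincide and equal, up to constants, $\tsc^{-1}\sum_{\eta=0}^{n-1}\|\bsh_\eta\|_2^{2s}\,|w_\eta|$ with $s=1$ for $M_1$ and $s=2$ for $M_2$, uniformly in $\xi$. I would then unfold the aliasing definition $w_\eta=\sum_{\bsh\cdot\bsz\equiv\eta}\widehat{v}(\bsh)$ and use $\|\bsh_\eta\|_2\le\|\bsh\|_2$ for every $\bsh$ in the class (minimality again) to collapse the sum over residue classes into a single sum over $\Z^d$, followed by Cauchy--Schwarz with the Korobov weight:
\[
  \sum_{\eta=0}^{n-1}\|\bsh_\eta\|_2^{2s}\,|w_\eta|
  \le
  \sum_{\bsh\in\Z^d}\|\bsh\|_2^{2s}\,|\widehat{v}(\bsh)|
  \le
  \Bigl(\sum_{\bsh\in\Z^d}\frac{\|\bsh\|_2^{4s}}{r_\alpha^2(\bsh)}\Bigr)^{1/2}\|v\|_{E_\alpha(\T^d)}.
\]
The product structure $r_\alpha^2(\bsh)=\prod_j\max(|h_j|^{2\alpha},1)$ shows the remaining sum converges exactly when $2\alpha-4s>1$, i.e.\ $\alpha>5/2$ for $s=1$ (part (i)) and $\alpha>9/2$ for $s=2$ (part (ii)); the binding constraint comes from the one-dimensional tails $|h_j|^{4s-2\alpha}$, which is precisely why the threshold is independent of $d$.

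The main obstacle I anticipate is the geometric lemma together with the bookkeeping of the aliasing sums: one must check that the minimal-norm hypothesis genuinely delivers both $|\,\|\bsh_\xi\|_2-\|\bsh_{\xi'}\|_2\,|\le\|\bsh_\eta\|_2$ and $\|\bsh_\eta\|_2\le\|\bsh\|_2$ for $\bsh$ in class $\eta$, and that after collapsing the residue classes the bound over $\Z^d$ is uniform in $n$. Once the factor estimate $|d_\xi-d_{\xi'}|/(d_{\xi'}+1)\le C_\tsc\|\bsh_\eta\|_2^2$ is in hand, the Schur test and the convergence computation are routine, and part (ii) follows from part (i) simply by squaring the factor bound.
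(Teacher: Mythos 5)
Your proposal is correct and follows essentially the same route as the paper's proof: right-multiplication by $(D+I)^{-1}$ (resp.\ $(D+I)^{-2}$), a $\sqrt{\|M\|_1\|M\|_\infty}$-type (Schur) bound on the resulting matrix, the minimal-$\ell_2$-norm property of the anti-aliasing set to control $\|\bsh_\xi\|_2$ via $\|\bsh_{\xi-\xi'}\|_2$ and $\|\bsh_{\xi'}\|_2$, collapsing the aliasing classes into a single sum over $\Z^d$, and Cauchy--Schwarz against the Korobov weight to obtain the thresholds $\alpha>5/2$ and $\alpha>9/2$. Your only departure is cosmetic: you bound the ratio $|d_\xi-d_{\xi'}|/(d_{\xi'}+1)$ by $C_\tsc\|\bsh_\eta\|_2^2$ via the reverse triangle inequality and a difference-of-squares factorization, whereas the paper splits it as $1+\|\bsh_\xi\|_2^2/\max(\|\bsh_{\xi'}\|_2^2,c)$ and handles the two terms separately.
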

\begin{proof}
We first prove the first order result (\textit{i}) and then prove the second order result (\textit{ii}).
\\[2mm]
(\textit{i}) Since $(D+I)$ is non-singular, we show 
\[
\|[D,W] \, (D+I)^{-1} \, \bsy\|_2\le c_1 \|\bsy\|_2
\qquad \text{for all } \bsy \in \R^n
.
\]
Hence we need to bound the induced matrix $p$-norm $\| A \|_p := \sup_{\bszero \ne \bsy \in \R^n} \| A \, \bsy \|_p / \| \bsy \|_p$ for $p=2$ for the matrix $A = [D,W] \, (D+I)^{-1} \in \R^{n\times n}$ by an absolute constant.
We have
\begin{align*}
[D,W]
= 
D\,W-W D
=
 \Bigl(
   2\pi^2 \left(\|\bsh_\xi\|^2_2-\|\bsh_{\xi'}\|^2_2\right) \, w_{\xi-\xi'}
 \Bigr)_{\xi,\xi'=0,\ldots,n-1}
 ,
\end{align*}
where the subscript of $w$ should be interpreted modulo~$n$, see~\eqref{eq:Wn-elements}.
For ease of notation we now multiply by $\tsc$ and consider the matrix $M$ defined by
\[
  M
  :=
  \tsc \,
  (D\,W-WD) \, (D+I)^{-1}
  =
  \left(
       \frac{
         \|\bsh_\xi\|^2_2-\|\bsh_{\xi'}\|^2_2
       }{
         \|\bsh_{\xi'}\|^2_2 + c
       }
       \, w_{\xi-\xi'}
     \right)_{\xi,\xi'=0,\ldots,n-1}
  ,
\]
where $c = 1/(2 \pi^2\tsc) > 0$.
Note that $\|[D,W] \, (D+I)^{-1}\|_2=\frac{1}{\tsc}\|M\|_2$.
By H\"{o}lder's inequality, we have $\|M\|_2\le\sqrt{\|M\|_1\| \, M\|_{\infty}}$, therefore we will bound $\|M\|_1 = \max_{\xi'=0,\ldots,n-1} \sum_{\xi=0}^{n-1} |M_{\xi,\xi'}|$ and $\|M\|_{\infty} = \max_{\xi=0,\ldots,n-1} \sum_{\xi'=0}^{n-1} |M_{\xi,\xi'}|$.
Clearly the diagonal elements of $M$ are zero and we can exclude those cases in the following.
For $\|M\|_1$ we obtain
\begin{align*}
 \|M\|_1
 &=
 \max_{\xi'\in\Z_n} \hspace{-1mm}   
 \sum_{\substack{\xi=0\\\xi\ne\xi'}}^{n-1} \left| \frac{\|\bsh_\xi\|^2_2-\|\bsh_{\xi'}\|^2_2}{\|\bsh_{\xi'}\|^2_2 + c} \, w_{\xi-\xi'} \right|
 \le 
 \max_{\xi'\in\Z_n}
  \hspace{-1mm}   
  \sum_{\substack{\xi=0\\\xi\ne\xi'}}^{n-1} \left[
    (1 + \frac{\|\bsh_\xi\|^2_2}{\max(\|\bsh_{\xi'}\|^2_2,c)} ) \, |w_{\xi-\xi'}|    
  \right]
 .
\end{align*}
Note that
\[
  \max_{\xi'\in\Z_n} \sum_{\substack{\xi=0\\\xi\ne\xi'}}^{n-1} |w_{\xi-\xi'}| 
  =
  \sum_{i=1}^{n-1} |w_{i}| 
  \le 
  \sum_{\bsh \in \Z^d} |\widehat{v}(\bsh)|
  =
  \|v\|_{A(\T^d)}
  <
  \infty
  ,
\]
since we assume $v \in A(\T^d)$, see Lemma~\ref{lem:u-series}.
We still need to bound
\begin{align}\label{eq:cont}
  \max_{\xi'\in\Z_n}
  \sum_{\substack{\xi=0\\\xi\ne\xi'}}^{n-1}
    \frac{\|\bsh_\xi\|^2_2}{\max(\|\bsh_{\xi'}\|^2_2,\,c)} \, |w_{\xi-\xi'}|
  =
   \max_{\xi'\in\Z_n} \sum_{\substack{\xi=0\\\xi\ne\xi'}}^{n-1} 
     \frac{\|\bsh_\xi\|^2_2}{\max(\|\bsh_{\xi'}\|^2_2,\,c)}
      \frac{\|\bsh_{\xi-\xi'}\|^2_2}{\|\bsh_{\xi-\xi'}\|^2_2} \, |w_{\xi-\xi'}|
  ,
\end{align}
where also $\bsh_{\xi-\xi'}$ has to be read as $\bsh_{(\xi-\xi') \bmod{n}}$.
Note that $\bsh_{\xi-\xi} = \bsh_0 = \bszero$ is excluded from the sum.
Since the anti-aliasing set is such that $\bsh_\xi$ has minimal $\ell_2$ norm by~\eqref{eq:hrepresenter} we can bound $\|\bsh_\xi\|_2 \le \|\bsh_\xi'\|_2$ for any $\bsh_\xi' \in A(\bsz,n,\xi)$ with the property $\bsh_\xi' \cdot \bsz \equiv \xi \pmod{n}$.
In particular for $\bsh_\xi' = \bsh_{\xi-\xi'} + \bsh_{\xi'}$ since $(\bsh_{\xi-\xi'}+\bsh_{\xi'}) \cdot \bsz \equiv \xi \pmod{n}$ for any choice of $\xi'=0,\ldots,n-1$.
Therefore
\[
  \|\bsh_\xi\|_2^2
  \le
  \|\bsh'_\xi\|_2^2
  =
  \|\bsh_{\xi-\xi'}+\bsh_{\xi'}\|_2^2
  \le
  \|\bsh_{\xi-\xi'}\|^2_2 + 2\,\|\bsh_{\xi-\xi'}\|_2\,\|\bsh_{\xi'}\|_2 + \|\bsh_{\xi'}\|^2_2
  ,
\]
and thus (remembering we have $\bsh_{\xi-\xi'} \ne \bszero$)
\[
  \frac{\|\bsh_\xi\|^2_2}
       {\max(\|\bsh_{\xi'}\|^2_2,\,c) \, \|\bsh_{\xi-\xi'}\|^2_2}
  \le
  \frac{\|\bsh_{\xi-\xi'}\|^2_2 + 2\,\|\bsh_{\xi-\xi'}\|_2\,\|\bsh_{\xi'}\|_2 + \|\bsh_{\xi'}\|^2_2}
       {\max(\|\bsh_{\xi'}\|^2_2,\,c) \, \|\bsh_{\xi-\xi'}\|^2_2}
  \le
  \max(1/c, \, 4)
  .
\]
Let $c' := \max(1/c,\, 4)$.
We continue from~\eqref{eq:cont} to obtain
\begin{align*}
\max_{\xi'\in\Z_n} \sum_{\substack{\xi=0\\\xi\ne\xi'}}^{n-1} 
     \frac{\|\bsh_\xi\|^2_2}{\max(\|\bsh_{\xi'}\|^2_2,\,c)}
      \frac{\|\bsh_{\xi-\xi'}\|^2_2}{\|\bsh_{\xi-\xi'}\|^2_2} \,|w_{\xi-\xi'}|
    &\le
c' \, 
\max_{\xi'\in\Z_n} \sum_{\substack{\xi=0\\\xi\ne\xi'}}^{n-1}
     \|\bsh_{\xi-\xi'}\|^2_2 \, \Biggl|
     \sum_{\bsh\in A(\bsz,n,\xi-\xi')} \widehat{v}(\bsh) \Biggr|
\\
&= c' \,
\max_{\xi'\in\Z_n}
\sum_{\substack{\xi=0\\\xi\ne\xi'}}^{n-1}
      \|\bsh_{\xi-\xi'}\|^2_2 \, \Biggl|
      \sum_{\bsh\in A(\bsz,n,\xi-\xi')} \widehat{v}(\bsh) \Biggr|
\\
&\le c' \,
\max_{\xi'\in\Z_n}
\sum_{\substack{\xi=0\\\xi\ne\xi'}}^{n-1} 
\sum_{\bsh\in A(\bsz,n,\xi-\xi')} \|\bsh\|^2_2 \, |\widehat{v}(\bsh)|
\\
&\le c' \,
\max_{\xi'\in\Z_n}
\sum_{\bsh\in \Z^d} \|\bsh\|^2_2 \, |\widehat{v}(\bsh)|
  .
\end{align*}
The last inequality follows from~\eqref{eq:disjunct} and is independent from $\xi'$ such that we can drop the maximum.
For the function $v\in E_\alpha(\T^d)$ with $\alpha>5/2$ the following holds by applying the Cauchy--Schwarz inequality and multiplying and dividing by $r_\alpha$, as defined in~\eqref{eq:ralpha},
\begin{align*}
  \sum_{\bsh\in \Z^d} \|\bsh\|^2_2 \, | \widehat{v}(\bsh)|
&\le
  \left( 
    \sum_{\bsh\in \Z^d} r^2_\alpha(\bsh) \, |\widehat{v}(\bsh)|^2
  \right)^{1/2}
  \left(
    \sum_{\bsh\in \Z^d} \frac{\|\bsh\|^4_2}{r^2_\alpha(\bsh)}
  \right)^{1/2}
  \\
&\le
  \|v\|_{E_\alpha(\T^d)}
  \left(
     \sum_{\bsh\in \Z^d} \frac{(\sqrt{d} \, \|\bsh\|_\infty)^4}{r^2_\alpha(\bsh)} 
  \right)^{1/2}
  \\
&\le
  \|v\|_{E_\alpha(\T^d)}
  \left(
    \sum_{\bsh\in \Z^d} \frac{d^2}{r^2_{\alpha-2}(\bsh)}  \right )^{1/2} \\
&\le
 \|v\|_{E_\alpha(\T^d)}
  \left( d^2 \, (1 + 2 \, \zeta(2\alpha-4))^d \right)^{1/2} 
  <
  \infty
  .
\end{align*}
Therefore we have bounded $\|M\|_1$ independent of $n$.
For $\|M\|_\infty$ we can proceed in a similar way to obtain
\begin{align*}
  \|M\|_\infty
  &=
  \max_{\xi\in\Z_n} \sum_{\substack{\xi'=0\\\xi'\ne\xi}}^{n-1}
  \left|
  \frac{\|\bsh_{\xi}\|^2_2 - \|\bsh_{\xi'}\|^2_2}
       {\|\bsh_{\xi'}\|^2_2 + c} \, w_{\xi-\xi'}
  \right|
 \\
  &\le
  \|v\|_{A(\T^d)} + c' \, \|v\|_{E_\alpha(\T^d)} \, \left( d^2 \, (1 + 2 \, \zeta(2\alpha-4))^d \right)^{1/2} 
  .
\end{align*}
Therefore, for any $\bsy \in \R^n$ it holds that
\[
  \|(D\,W-WD) \, \bsy\|_2 
  \le
  c_1 \, \|(D+I) \, \bsy\|_2
,
\]
where $c_1$ is a constant independent of $\bsy$ and $n$.
\\[2mm]
(\textit{ii}) Similar argument holds for second order convergence.
Then
\begin{align*}
 [D,\,[D,W]]\,(D+I)^{-2}
 &=
 \left( 
 \frac{
   (\|\bsh_{\xi}\|^2_2-\|\bsh_{\xi'}\|^2_2)^2
  }{
    2\pi^2\tsc \, (\|\bsh_{\xi'}\|^2_2+c)^2
  }
  \, w_{\xi-\xi'}
 \right)_{\xi,\xi'=0,\ldots,n-1},
\end{align*} 
with the same constant $c=1/(2\pi^2\tsc)$.
For $\xi \ne \xi'$ we can multiply and divide by $\|\bsh_{\xi-\xi'}\|_2^4$ and then
\[
\frac{(\|\bsh_{\xi}\|^2_2-\|\bsh_{\xi'}\|^2_2)^2}{(\|\bsh_{\xi'}\|^2_2+c)^2\|\bsh_{\xi-\xi'}\|_2^4} 
\le
\frac{(\|\bsh_{\xi-\xi'}+\bsh_{\xi'}\|^2_2+\|\bsh_{\xi'}\|^2_2)^2}{(\|\bsh_{\xi'}\|^2_2+c)^2\|\bsh_{\xi-\xi'}\|_2^4} 
\]
has an upper bound of $\max(25,1/c^2)$.
Therefore, the $\ell_1$ and $\ell_\infty$ induced norms of this matrix can be bounded if the potential function $v(\bsx)$ is in Korobov space $E_\alpha(\T^d)$ with the smoothness parameter $\alpha>9/2$:
\begin{align*}
\max_{\xi'\in\Z_n} \sum_{\substack{\xi=0\\\xi\ne\xi'}}^{n-1} \|\bsh_{\xi-\xi'}\|_2^4 \, |w_{\xi-\xi'}| 
&\le
\sum_{\bsh\in\Z^d}\|\bsh\|_2^4 \, |\widehat{v}(\bsh)|
\le
\|v\|_{E_\alpha(\T^d)}
\left(d^4 \, (1+2\zeta(2\alpha-8))^d  \right)^{1/2}
.
\end{align*}
We have hence proved the claims of the lemma.
\end{proof}

In Algorithm~\ref{alg:strang}, our procedure of the time-stepping is shown. Each time-step is done with complexity $\mathcal{O}(n\log n)$.
Matrices $\rme^{-\frac{\imagunit}{2\tsc} V_n  \Deltat}$ and $\rme^{-\frac{\imagunit\tsc}{2} D_n \Deltat}$ are diagonal, hence there is no need to store $n$-by-$n$ matrices.

\begin{algorithm}
\caption{Strang splitting}
\label{alg:strang}
\begin{algorithmic}
\State{Input:}
\State{$\Deltat,m,g$} \Comment{$m\Deltat = T$ is the final time, $g$ is the initial condition}
\State{$\calA(\bsz,n) = \{ \bsh_{0},...,\bsh_{n-1} \} \subset \Z^{d\times n} $} \Comment{Anti-aliasing set of full cardinality}
\State{$\Lambda(\bsz,n) = \{ \bsp_{0},...,\bsp_{n-1} \} \subset \T^{d\times n} $} \Comment{Lattice points}
\State{} 
\State{$V_n = \diag\left( \left( v(\bsp_k) \right)_{k=0,\ldots,n-1} \right)$} \Comment{The potential matrix on lattice points}
\State{$D_n =  \diag\left((4\pi^2\|\bsh_\xi\|_2^2)_{\xi=0,\ldots,n-1}\right)$} \Comment{The Laplacian matrix on the anti-aliasing set}
\State{$\widehat{\bsu}_a^0=\widehat{\bsg}_a=(\widehat{g}_a(\bsh_{0}),\dots,\widehat{g}_a(\bsh_{n-1}))=F_n (g(\bsp_{0}),\dots,g(\bsp_{n-1}))/\sqrt{n} $} 
\Statex{ }
\For{\texttt{$k=1,2,...,m$}} 
        \State $\widehat{\bsu}_a^k=F_n \rme^{-\frac{\imagunit}{2\tsc} V_n  \Deltat}\, F_n^{-1} \rme^{-\frac{\imagunit\tsc}{2} D_n \Deltat}\,F_n \rme^{-\frac{\imagunit}{2\tsc} V_n  \Deltat}\, F_n^{-1} \, \widehat{\bsu}_a^{k-1}$
\EndFor 
\Statex{ }
\State Output: $\widehat{\bsu}_a^m$
\end{algorithmic}
\end{algorithm}

\subsection{Strang splitting and rank-$r$ lattices}
In this section, we generalize the results of the previous section for rank-$r$ lattices. 
Consider a rank-$r$ lattice $\Lambda(\bsZ,\bsn)$ in canonical form, and the corresponding anti-aliasing set $\calA(\bsZ,\bsn)$ with full cardinality $n=\prod_{i=1}^r n_i$.
We enumerate the anti-aliasing set in ``lexicographical ordering'' by identifying $\bsh^{(\chi)} = \bsh_{\bsxi}$ for $\chi=0,\ldots,n-1$ and $\bsxi = \bsh_{\bsxi} \cdot \bsz \bmod{\bsn}$ for all $\bsh_{\bsxi} \in \calA(\bsZ,\bsn)$ such that 
\begin{align*}
\chi
=
\xi_1 \, n_2 \cdots n_r + \xi_2 \, n_3 \cdots n_r + \cdots + \xi_r 
=
\sum_{i=1}^{r} \left( \xi_i \prod_{j=i+1}^{r} n_j \right)
,
\end{align*}
for all $\bsxi \in \Z_{n_1} \oplus \cdots \oplus \Z_{n_r}$.
Likewise, we enumerate the lattice points by identifying $\bsp^{(\kappa)} = \bsp_{\bsk}$ for $\kappa = 0,\ldots,n-1$ such that
\[
\kappa
=
\kappa_1 \, n_2 \cdots n_r + \kappa_2 \, n_3 \cdots n_r + \cdots + \kappa_r 
=
\sum_{i=1}^{r} \left( \kappa_i \prod_{j=i+1}^{r} n_j \right)
,
\]
for all $\bsk \in \Z_{n_1} \oplus \cdots \oplus \Z_{n_r}$.
Then the ordinary differential equation \eqref{ode} holds with,
$\widehat{\bsu}_{0} =\widehat{\bsg}_a:=(\widehat{g}_a(\bsh^{(0)}),\dots,\widehat{g}_a(\bsh^{(n-1)}))$,
\begin{align}\label{eq:Dn_r}
  D_\bsn
  :=
  \diag\left((4\pi^2\|\bsh^{(\chi)}\|_2^2)_{\chi=0,\ldots,n-1}\right)
  ,
\end{align}
and 
\begin{align}\label{eq:Wn_r}
 W_\bsn := F_\bsn V_\bsn F^{-1}_\bsn,
 \end{align}
with 
\begin{align}\label{eq:Vn_r}
  V_\bsn
  :=
  \diag\left( \left( v(\bsp^{(\kappa)}) \right)_{\kappa=0,\ldots,n-1} \right)
  ,
\end{align}
where $F_\bsn$ is the $r$-dimensional discrete Fourier transform. With these notations we have the following generalization of Lemma~\ref{lem:rank-1-commutator-bounds}.

\begin{lemma}[Rank-$r$ commutator bounds]\label{lem:rank-r-commutator-bounds}
Given a rank-$r$ lattice $\Lambda(\bsZ,\bsn)$ in canonical form with the number of points $n=\prod_{i=1}^r n_i$, and a TDSE with a potential function $v \in E_\alpha(\T^d)$ with $\alpha \ge 2$ and an initial condition $g \in E_\beta(\T^d)$ with $\beta \ge 2$.
Let $D = \tfrac{\tsc}2 D_\bsn$ and $W = \frac1\tsc W_\bsn$ with $D_\bsn$ and $W_\bsn = F_\bsn V_\bsn F^{-1}_\bsn$ as defined in~\eqref{eq:Dn_r} and~\eqref{eq:Wn_r}, and with $V_\bsn$ as defined in~\eqref{eq:Vn_r} using the potential function $v$.

If the anti-aliasing set 
$\calA(\bsZ,\bsn) = \{ \bsh_{\bsxi} \in \Z^d : \bsZ^{\top}\bsh_{\bsxi} \equiv \bsxi \pmod{\bsn} \text{ for } \bsxi \in \Z_{n_1} \oplus \cdots \oplus \Z_{n_r} \}$,
with full cardinality, is chosen such that each $\bsh_{\bsxi}$ with $\bsxi \in \Z_{n_1} \oplus \cdots \oplus \Z_{n_r}$ has minimal $\ell_2$ norm, i.e.,
\begin{align*}
  \| \bsh_{\bsxi} \|_2
  =
  \min_{\bsh' \in A(\bsZ,\bsn,\bsxi)} \|\bsh'\|_2
  ,
\end{align*}
with
\begin{align*}
  A(\bsZ,\bsn,\bsxi)
  :=
  \bigl\{ \bsh \in \Z^d : \bsZ^{\top}\bsh  \equiv \bsxi \pmod{\bsn} \bigr\}
  ,
\end{align*}
then the following hold.
\\[2mm]
(\textit{i}) If $v \in E_\alpha(\T^d)$ with parameter $\alpha > 5/2$ then, for all $\bsy \in \R^n$ we have
\begin{align*}
 &\|[D,W] \, \bsy\|_2\le c_1 \|(D+I) \, \bsy\|_2,
 \end{align*}
 where $c_1$ is a constant independent of $\bsn$ and $\bsy$.
\\[2mm]
(\textit{ii}) If $v \in E_\alpha(\T^d)$ with parameter $\alpha > 9/2$ then, for all $\bsy \in \R^n$ we have
 \begin{align*}
 &\|[D,[D,W]] \, \bsy\|_2\le c_2 \|(D+I)^2 \, \bsy\|_2,
\end{align*}
where $c_2$ is a constant independent of $\bsn$ and $\bsy$. 
\end{lemma}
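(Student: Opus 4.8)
The plan is to show that the rank-$r$ bounds follow by repeating the argument of Lemma~\ref{lem:rank-1-commutator-bounds} almost verbatim, once the scalar index $\xi\in\Z_n$ and the single modulus $n$ are replaced by the multiindex $\bsxi\in\Z_{n_1}\oplus\cdots\oplus\Z_{n_r}$ and the modulus vector $\bsn$. Concretely, I would first establish the two structural facts that made the rank-$1$ proof work, and then observe that the remaining norm estimates carry over unchanged.

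The first structural fact is that $W_\bsn=F_\bsn V_\bsn F_\bsn^{-1}$ is a \emph{multilevel circulant} matrix: writing $w_{\bsxi,\bsxi'}$ for its entry in the lexicographic enumeration, a direct computation using $F_\bsn=\otimes_{i=1}^r F_{n_i}$ and the diagonality of $V_\bsn$ gives
\[
  w_{\bsxi,\bsxi'}
  =
  w_{(\bsxi-\bsxi')\bmod\bsn}
  =
  \frac1n\sum_{\bsk}v(\bsp_\bsk)\prod_{j=1}^r\exp(-\twopii(\xi_j-\xi_j')k_j/n_j)
  ,
\]
where the subtraction is component-wise modulo $\bsn$. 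Expanding $v$ into its Fourier series and applying the character property exactly as in Lemma~\ref{lem:aliasing} identifies the symbol as $w_{\boldsymbol{\eta}}=\sum_{\bsh\,:\,\bsZ^\top\bsh\equiv\boldsymbol{\eta}\tpmod{\bsn}}\widehat{v}(\bsh)$, the rank-$r$ analogue of~\eqref{eq:Wn-elements}. Consequently $[D,W]$ has entries proportional to $(\|\bsh^{(\chi)}\|_2^2-\|\bsh^{(\chi')}\|_2^2)\,w_{(\bsxi-\bsxi')\bmod\bsn}$, with the same block structure as before. The second structural fact is the minimality inequality: since $\bsZ^\top(\bsh_{\bsxi-\bsxi'}+\bsh_{\bsxi'})\equiv(\bsxi-\bsxi')+\bsxi'\equiv\bsxi\pmod{\bsn}$, the vector $\bsh_{\bsxi-\bsxi'}+\bsh_{\bsxi'}$ lies in $A(\bsZ,\bsn,\bsxi)$, so the minimal-norm choice of $\bsh_\bsxi$ yields
\[
  \|\bsh_\bsxi\|_2^2
  \le
  \|\bsh_{\bsxi-\bsxi'}+\bsh_{\bsxi'}\|_2^2
  \le
  \|\bsh_{\bsxi-\bsxi'}\|_2^2+2\,\|\bsh_{\bsxi-\bsxi'}\|_2\,\|\bsh_{\bsxi'}\|_2+\|\bsh_{\bsxi'}\|_2^2
  ,
\]
which is precisely the inequality driving the rank-$1$ proof.

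With these in hand the remainder is a transcription. For part~(\textit{i}) I form $M=\tsc\,(DW-WD)(D+I)^{-1}$, bound $\|M\|_2\le\sqrt{\|M\|_1\,\|M\|_\infty}$ by H\"older, and split each column (resp.\ row) sum into a term controlled by $\|v\|_{A(\T^d)}$ and a term with factor $\|\bsh_\bsxi\|_2^2/\max(\|\bsh_{\bsxi'}\|_2^2,c)$. Multiplying and dividing by $\|\bsh_{\bsxi-\bsxi'}\|_2^2$ and invoking the minimality inequality bounds that factor by $c'=\max(1/c,4)$, leaving $\sum_{\bsxi}\|\bsh_{\bsxi-\bsxi'}\|_2^2\,|w_{\bsxi-\bsxi'}|$. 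The three-way disjoint decomposition~\eqref{eq:disjunct} collapses the double sum $\sum_{\bsxi}\sum_{\bsh\in A(\bsZ,\bsn,\bsxi)}$ into $\sum_{\bsh\in\Z^d}$, so this is at most $\sum_{\bsh\in\Z^d}\|\bsh\|_2^2\,|\widehat{v}(\bsh)|$, independently of $\bsxi'$ and hence of $\bsn$; a Cauchy--Schwarz estimate against $r_\alpha$ from~\eqref{eq:ralpha} shows this is finite for $\alpha>5/2$. Part~(\textit{ii}) is identical after replacing $\|\bsh_{\bsxi-\bsxi'}\|_2^2$ by $\|\bsh_{\bsxi-\bsxi'}\|_2^4$, the bound $\max(1/c,4)$ by $\max(25,1/c^2)$, and the tail sum by $\sum_{\bsh\in\Z^d}\|\bsh\|_2^4\,|\widehat{v}(\bsh)|$, which converges for $\alpha>9/2$.

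The one genuinely new point — and the only place the rank-$r$ structure enters — is verifying the multilevel circulant form of $W_\bsn$ and that the entrywise index difference is the correct component-wise reduction modulo $\bsn$; this is where the tensor-product structure of $F_\bsn$ and the one-to-one correspondence between $\calA(\bsZ,\bsn)$ and $\Z_{n_1}\oplus\cdots\oplus\Z_{n_r}$ (Definition~\ref{def:rank-r-antialiasing}) must be used with care. Once the symbol $w_{\boldsymbol{\eta}}$ and the minimality inequality are in place, none of the subsequent estimates see the rank, so the smoothness thresholds $\alpha>5/2$ and $\alpha>9/2$ are inherited unchanged from the rank-$1$ case.
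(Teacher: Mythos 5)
Your proposal is correct and follows exactly the paper's approach: the paper's own proof of Lemma~\ref{lem:rank-r-commutator-bounds} is a single sentence stating that, thanks to the lexicographical ordering of $D_\bsn$, $W_\bsn$ and $V_\bsn$, one ``operates in the same way'' as in Lemma~\ref{lem:rank-1-commutator-bounds}. In fact your write-up is more complete than the paper's, since you explicitly verify the two facts the paper leaves implicit — the multilevel circulant symbol $w_{\boldsymbol{\eta}}=\sum_{\bsZ^\top\bsh\equiv\boldsymbol{\eta}\tpmod{\bsn}}\widehat{v}(\bsh)$ of $W_\bsn$ and the component-wise minimality inequality $\|\bsh_\bsxi\|_2\le\|\bsh_{\bsxi-\bsxi'}+\bsh_{\bsxi'}\|_2$ — which are precisely what make the rank-$1$ estimates carry over unchanged.
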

\begin{proof}
Due to the lexicographical ordering on matrices $D_\bsn, W_\bsn$ and $V_\bsn$, we operate in the same way as in Lemma~\ref{lem:rank-1-commutator-bounds}.
\end{proof}

We note that Algorithm~\ref{alg:strang} works in the same manner by replacing the inputs to the rank-$r$ setting and using $r$-dimensional FFTs.

\subsection{Total time discretization error bound}

Combining Theorem~\ref{thm:strang-local-error} with Lemmas~\ref{lem:rank-1-commutator-bounds} and~\ref{lem:rank-r-commutator-bounds} we obtain the following global error bound.

\begin{theorem}[Total error bounds]\label{thm:error-bound}
Given a rank-$r$ lattice $\Lambda(\bsZ,\bsn)$ in canonical form with number of points $n=\prod_{i=1}^r n_i$, and a TDSE with a potential function $v \in E_\alpha(\T^d)$ with $\alpha \ge 2$ and an initial condition $g \in E_\beta(\T^d)$ with $\beta \ge 2$.
Let $D = \tfrac{\tsc}2 D_\bsn$ and $W = \frac1\tsc W_\bsn$ with $D_\bsn$ and $W_\bsn = F_\bsn V_\bsn F^{-1}_\bsn$ as defined in~\eqref{eq:Dn_r} and~\eqref{eq:Wn_r}, and with $V_\bsn$ as defined in~\eqref{eq:Vn_r} using the potential function $v$.

If the anti-aliasing set 
$\calA(\bsZ,\bsn) = \{ \bsh_{\bsxi} \in \Z^d : \bsZ^{\top}\bsh_{\bsxi} \equiv \bsxi \pmod{\bsn} \text{ for } \bsxi \in \Z_{n_1} \oplus \cdots \oplus \Z_{n_r} \}$,
with full cardinality, is chosen such that each $\bsh_{\bsxi}$ with $\bsxi \in \Z_{n_1} \oplus \cdots \oplus \Z_{n_r}$ has minimal $\ell_2$ norm, i.e.,
\begin{align*}
  \| \bsh_{\bsxi} \|_2
  =
  \min_{\bsh' \in A(\bsZ,\bsn,\bsxi)} \|\bsh'\|_2
  ,
\end{align*}
with
\begin{align*}
  A(\bsZ,\bsn,\bsxi)
  :=
  \bigl\{ \bsh \in \Z^d : \bsZ^{\top}\bsh  \equiv \bsxi \pmod{\bsn} \bigr\}
  ,
\end{align*}
then, by applying the Strang Splitting
\begin{equation*}
  \widehat{\bsu}_a^{k+1}
  =
  \rme^{-\frac{\imagunit}{2\tsc} W_\bsn \Deltat}\,\rme^{-\frac{\imagunit\tsc}{2} D_\bsn \Deltat}\,\rme^{-\frac{\imagunit}{2\tsc} W_\bsn\Deltat} \, \widehat{\bsu}_a^k
  \qquad \text{for } k=0,1,\dots,m-1,
\end{equation*}
the following hold:
\\[2mm]
(\textit{i})  If $v \in E_\alpha(\T^d)$ with parameter $\alpha > 5/2$, then the error is bounded for $t=k \Deltat$ by
\begin{align*}
 \| u_a^k(\cdot) - u_a(\cdot,t) \|_{L_2}
 &\le
 \Deltat \; C_1 t \max_{0\le t'\le t} \|(D+I) \, \widehat{\bsu}_{t'}\|_2,
 \end{align*}
 where $C_1$ is a constant independent of $n$, $k$, and $\Deltat$.
\\[2mm]
 (\textit{ii})  If $v \in E_\alpha(\T^d)$ with parameter $\alpha>9/2$, then the error is bounded for $t=k\Deltat$ by
 \begin{align*}
  &\| u_a^k(\cdot)-u_a(\cdot,t) \|_{L_2}
  \le
  (\Deltat)^2 \; C_2 t  \max_{0\le t'\le t} \|(D+I)^2 \, \widehat{\bsu}_{t'}\|_2,
\end{align*}
where $C_2$ is a constant independent of $n$, $k$ and $\Deltat$. 
\label{thm:grobalerror}
\end{theorem}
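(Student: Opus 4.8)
The plan is to read the two claimed bounds purely as statements about the \emph{time}-discretization error of the semi-discrete system~\eqref{ode}, i.e.\ the gap between the Strang iterate $\widehat{\bsu}_a^k=\mathcal{T}(\Deltat)^k\,\widehat{\bsu}_0$ and the exact flow $\widehat{\bsu}_{k\Deltat}=\mathcal{S}(k\Deltat)\,\widehat{\bsu}_0$, where I set $\mathcal{S}(\tau):=\rme^{(A+B)\tau}$ with $A:=-\tfrac{\imagunit\tsc}{2}D_\bsn=-\imagunit D$ and $B:=-\tfrac{\imagunit}{\tsc}W_\bsn=-\imagunit W$, and $\mathcal{T}(\tau):=\rme^{\frac12 B\tau}\rme^{A\tau}\rme^{\frac12 B\tau}$ is the Strang propagator appearing in~\eqref{StrangTDSE}. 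Since the anti-aliasing exponentials $\{\exp(\twopii\,\bsh\cdot\bsx)\}_{\bsh\in\calA(\bsZ,\bsn)}$ are $L_2(\T^d)$-orthonormal, Parseval's identity gives $\|u_a^k(\cdot)-u_a(\cdot,t)\|_{L_2}=\|\widehat{\bsu}_a^k-\widehat{\bsu}_{k\Deltat}\|_2$, so the whole argument reduces to bounding $\|(\mathcal{T}(\Deltat)^k-\mathcal{S}(k\Deltat))\,\widehat{\bsu}_0\|_2$ in the Euclidean norm.

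First I would expand this difference by the telescoping (``Lady Windermere's fan'') identity
\[
  \mathcal{T}^k-\mathcal{S}^k
  =
  \sum_{j=0}^{k-1}\mathcal{T}^{\,k-1-j}\,(\mathcal{T}-\mathcal{S})\,\mathcal{S}^{j},
\]
so that, using $\mathcal{S}(\Deltat)^j\,\widehat{\bsu}_0=\widehat{\bsu}_{j\Deltat}$, the global error becomes a sum of local errors $(\mathcal{T}-\mathcal{S})\,\widehat{\bsu}_{j\Deltat}$, each transported forward by a power of the numerical flow. Each local term is bounded by Theorem~\ref{thm:strang-local-error}: with the identification $A=-\imagunit D$, $B=-\imagunit W$, $\omega=1$ one computes $[A,B]=-[D,W]$ and $[A,[A,B]]=\imagunit\,[D,[D,W]]$, so the hypotheses of that theorem are exactly the commutator estimates supplied by Lemma~\ref{lem:rank-r-commutator-bounds} (case $\alpha>5/2$ for part~(i), with local order $2$, and case $\alpha>9/2$ for part~(ii), with local order $3$). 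A small bookkeeping step is to trade $\|(A+I)^p\,\cdot\|_2$ for $\|(D+I)^p\,\cdot\|_2$: since $D_\bsn$ is a nonnegative diagonal matrix, its diagonal entries $d_\chi$ satisfy $1+d_\chi^2\le(1+d_\chi)^2\le 2(1+d_\chi^2)$, so the weights $|1-\imagunit d_\chi|^p$ and $(1+d_\chi)^p$ are equivalent up to absolute constants, and the local bounds read $\|(\mathcal{T}-\mathcal{S})\,\widehat{\bsu}_{j\Deltat}\|_2\le C_p\,\Deltatp^{\,p+1}\,\|(D+I)^p\,\widehat{\bsu}_{j\Deltat}\|_2$ for $p=1,2$.

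The remaining ingredient, and the one I would treat most carefully, is the uniform stability of the propagator: I claim $\|\mathcal{T}(\Deltat)^{m}\|_2=1$ for every $m$. This is where the (real) potential enters, since $V_\bsn$ is a real diagonal matrix, hence $W_\bsn=F_\bsn V_\bsn F_\bsn^{-1}$ is Hermitian while $D_\bsn$ is real diagonal, so each factor $\rme^{-\frac{\imagunit}{2}W\Deltat}$ and $\rme^{-\imagunit D\Deltat}$ is unitary and $\mathcal{T}(\Deltat)$ is a product of unitaries; if one drops the realness of $v$, the cruder bound $\|\mathcal{T}(\Deltat)^m\|_2\le\rme^{\|W\|_2\,m\Deltat}\le\rme^{\|W\|_2 T}$ still furnishes a constant for finite $T$. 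Combining $\|\mathcal{T}^{\,k-1-j}\|_2=1$ with $j\Deltat\le t$ then yields
\[
  \|\widehat{\bsu}_a^k-\widehat{\bsu}_{k\Deltat}\|_2
  \le
  \sum_{j=0}^{k-1} C_p\,\Deltatp^{\,p+1}\,\|(D+I)^p\,\widehat{\bsu}_{j\Deltat}\|_2
  \le
  k\,C_p\,\Deltatp^{\,p+1}\max_{0\le t'\le t}\|(D+I)^p\,\widehat{\bsu}_{t'}\|_2,
\]
and $k\,\Deltat=t$ turns this into $C_p\,t\,\Deltatp^{\,p}\max_{0\le t'\le t}\|(D+I)^p\,\widehat{\bsu}_{t'}\|_2$, which is precisely part~(i) for $p=1$ and part~(ii) for $p=2$. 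The main obstacle is conceptual rather than computational: the accumulation of the $k\approx t/\Deltat$ local errors is harmless only because the numerical flow neither amplifies nor contracts in the Euclidean norm, so that a local order $p+1$ collapses to a global order $p$ without an exponentially growing prefactor; all the genuinely hard, dimension-independent work has already been done in Lemma~\ref{lem:rank-r-commutator-bounds}, and the present theorem is essentially a stability-plus-consistency assembly via the telescoping identity and Parseval.
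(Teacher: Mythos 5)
Your proposal is correct and follows essentially the same route as the paper's own proof: the same telescoping decomposition $\mathcal{T}^k-\mathcal{S}^k=\sum_{j}\mathcal{T}^{k-1-j}(\mathcal{T}-\mathcal{S})\mathcal{S}^{j}$, the same application of Theorem~\ref{thm:strang-local-error} fed by the commutator bounds of Lemma~\ref{lem:rank-r-commutator-bounds}, and the same unitarity argument giving $\|\mathcal{T}(\Deltat)\|_2=1$ so that the $k$ local errors of order $p+1$ sum to a global bound $C_p\,t\,\Deltatp^{p}$. Your explicit bookkeeping for the identification $A=-\imagunit D$, $B=-\imagunit W$ and the equivalence of $\|(I-\imagunit D)^p\,\cdot\|_2$ with $\|(D+I)^p\,\cdot\|_2$ is a point the paper passes over silently, but it does not change the substance of the argument.
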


\begin{proof}
\mbox{}
\\[2mm]
(\textit{i}) Let us denote the Strang splitting operator by $S=\rme^{-\frac{\imagunit}{2\tsc} W_\bsn \Deltat}\rme^{-\frac{\imagunit\tsc}{2} D_\bsn \Deltat }\rme^{-\frac{\imagunit}{2\tsc} W_\bsn \Deltat }$ and the true solution operator by $T=\rme^{-(\frac{\imagunit}{\tsc} W_\bsn -\frac{\imagunit\tsc}{2} D_\bsn )\Deltat}$.
We have the following for first order convergence:
\begin{align*}
  \| u_a^k(\cdot)-u_a(\cdot,k\Deltat) \|_{L_2}
  &=
  \| S^k \, \widehat{\bsg} - T^k \, \widehat{\bsg} \|_2 \\
  &=
  \left\|\sum_{j=0}^{k-1}S^{k-j-1}(S-T)T^{j} \, \widehat{\bsg} \right\|_2 \\
  &\le
  \biggl(\max_{0\le t'\le t}\left \|(S-T) \, \widehat{\bsu}_{t'} \right\|_2 \biggr) \sum_{j=0}^{k-1} \left\| S^{k-j-1} \right\|_2 ,
\end{align*}
by using a telescoping sum, for the maximum argument $t'$ there exists $\ell\in\{0,1,...,k\}$ such that $t'=\ell\Deltat$. 
Applying Theorem~\ref{thm:strang-local-error} and Lemmas~\ref{lem:rank-1-commutator-bounds} and~\ref{lem:rank-r-commutator-bounds}, we have $\|S\bsy-T\bsy\|\le C_1 (\Deltat)^2 \|(D+I)\bsy\|$ for the first order convergence condition and, 
 $\|S\bsy-T\bsy\|\le C_2 (\Deltat)^3 \|(D+I)^2\bsy\|$ for the second order convergence condition, for all $\bsy \in \R^d$. 
Note that $\|S\|_2\le \|\rme^{-\frac{\imagunit}{2\tsc} W_\bsn \Deltat}\|_2\|\rme^{-\frac{\imagunit\tsc}{2} D_\bsn \Deltat}\|_2\|\rme^{-\frac{\imagunit}{2\tsc} W_\bsn \Deltat}\|_2=1$, because the $\ell_2$ norm of a matrix is the largest singular value of the matrix, e.g.,
\begin{align*}
  \left\|\rme^{-\frac{\imagunit}{2\tsc} W_\bsn \Deltat}\right\|_2
  &=
  \sqrt{\lambda_{\max} \left(\rme^{-\frac{\imagunit}{2\tsc} W_\bsn \Deltat} \left(\rme^{-\frac{\imagunit}{2\tsc} W_\bsn \Deltat}\right)^* \right)}\\
  &=
  \sqrt{\lambda_{\max} \left(F_\bsn J_{v,\bsn} F^{-1}_\bsn \left( F_\bsn J_{v,\bsn} F^{-1}_\bsn \right)^* \right)}\\
  &=
  \sqrt{\lambda_{\max} (I)}=1,
\end{align*}
where $\lambda_{\max}$ denotes the largest eigenvalue, $J_{v,\bsn} = \diag[(\rme^{-\frac{\imagunit}{2} v(\bsx) \Deltat})_{\bsx \in \Lambda(\bsZ, \bsn)}]$ and $A^*$ is the Hermitian conjugate of $A$.
Hence we obtain the following for the first case:
\begin{align*}
  \Biggl( \sum_{j=0}^{k-1} \| S^{k-j-1} \|_2 \Biggr) \max_{0\le t'\le t}\|(S-T)\, \widehat{\bsu}_{t'}\|_2&
  \le
  k C_1 (\Deltat)^2 \max_{0\le t'\le t}\|(D+I) \, \widehat{\bsu}_{t'}\|_2 \\
  &=
  C_1 t \Deltat  \max_{0\le {t'}\le t} \|(D+I) \, \widehat{\bsu}_{t'}\|_2
  .
\end{align*}
\\[2mm]
(\textit{ii}) For second order convergence a similar argument holds and we obtain
\begin{align*}
  \left\| u_a^k(\cdot)-u_a(\cdot,k\Deltat) \right\|_{L_2}
  &\le
  k C_2 (\Deltat)^3 \max_{0\le t'\le t}\left\|(D+I)^2 \, \widehat{\bsu}_{t'}\right\|_2 \\
  &=
  C_2 (\Deltat)^2 t \max_{0\le t'\le t} \left\|(D+I)^2 \, \widehat{\bsu}_{t'}\right\|_2
  .
\end{align*}
This concludes the proof.
\end{proof}

Note that this shows that the smoothness for the potential $v$ required for second order convergence is independent of the number of dimensions.
This is a big improvement compared to the results shown in \cite{G07} with respect to sparse grids, where the smoothness $\alpha$ needs to increase for increasing dimension to obtain second order convergence.

\section{Numerical results}\label{sec:3}

In this section, we demonstrate the method with numerical results. We particularly consider three quantities of interest: approximation error against the time step;
evolution of the norm and the energy of the wave function over the time period; and the error which is caused by the physical discretization.
To compare with the results from \cite{G07} using sparse grids, we choose the same experiments, but since our method allows the results to also be calculated for higher $d$ than in \cite{G07} we extended the experiments.

\subsection{Component-by-component construction}

For constructing the rank-$1$ lattice and the anti-aliasing set, we employ the fast component-by-component construction for lattice sequences, see, e.g., \cite{MR2272256}.
We use the script \texttt{fastrank1expt.m}, available online \cite{MR2198499} for fast component-by-component construction of a rank-$1$ lattice sequence with a prime power of points. We use powers of~$2$.
The lattice point set is optimized for integration in the (unweighted) Korobov space with smoothness $\alpha=1$ (in a common alternative notation this is $\alpha=2$, as is the case for the construction script).
After having obtained the generating vectors we construct the corresponding anti-aliasing sets in accordance with Lemma~\ref{lem:rank-1-commutator-bounds} in the following manner:
\begin{enumerate}
\itemsep0em
\item Generate all $\bsh \in \Z^d$ for which $\|\bsh \|_2 \le r$ for some well chosen $r$.
\item Sort the points according to the $\ell_2$-norm in ascending order.
\item Calculate $m_{\bsh} \equiv \bsh \cdot \bsz  \pmod{n}$ in sorted order and add $\bsh$ to $\calA(\bsz,n)$ if the value $m_{\bsh}$ has not been seen before. Repeat this step until the set has the cardinality $n$.
\end{enumerate}
We refer to \cite[Section~2.6]{cools2010constructing} for iteratively constructing $\bsh$ in a bounded region.

To compare our results with the results in \cite{G07}, we regenerated the data from that paper as accurately as possible from the graphs therein.
In Figures \ref{timestep1} and~\ref{timestepHigh}, we denote with SG the results from \cite{G07} using sparse grids, and by LR our method using lattice rules.
To make a fair comparison, we choose as close as possible the same number of basis functions $n$ as in \cite{G07} whenever this is known.
We calculate the number of basis functions $n_{GS}$ for the $d$-dimensional sparse grid with level $\ell$ by
\[
n_{\textrm{SG}}
=
\sum_{i=0}^{\ell-1} 2^i \binom{d-1+i}{i}. 
\]  
The corresponding numbers of basis functions for both methods and the generating vectors for the rank-$1$ lattice used in the experiments are exhibited in Table~\ref{tb:parameter}.

\begin{table}[]
\centering
\label{tb:parameter}
\begin{tabular}{c|c|c|c}\hline
 $d$ & $n$  & $\bsz^{\top}$& $n_{\textrm{SG}}$ from \cite{G07} \\\hline
\vphantom{$A^{A^A}$} $2$  & $2^{18} $& $(1,100135)$ &  $2^{17.7}$ or $2^{19.9}$ \;*   \\[2mm]
 & $2^{20}$ & $ (1, 443165)$& $2^{19.9}$ \\[2mm]
 $3$  &$2^{22}$ & $(1,1737355,261247)$ & $2^{22.9}$   \\[2mm]
  & $2^{25}$ & $ (1,12386359,15699201)$& $2^{25.4}$ \\[2mm] 
  $4$ to $12$  & $2^{25}$ & \begin{tabular}{@{}c@{}}$(1, 12386359,15699201,6807287,$\\	$13966305, 6107923, 4432603, 2304135$\\	$7323801, 5705679, 5643703, 3867405)$\end{tabular} & Not available\\[2mm]\hline
\end{tabular}
\caption{Parameters of the numerical results. For (*) the level of the sparse grid is not specified as one number in \cite{G07}. For $d\ge 4$, we always choose $n=2^{25}$, and $\bsz$ is chosen to be the first $d$ components, e.g., for $d=4$, $\bsz^{\top}=(1, 12386359,15699201,6807287)$.}
\end{table}

\subsection{Convergence with respect to time step size}

As is in \cite{G07_1,G07,jahnke2000error} we consider the error of the calculated solution in terms of decreasing time steps against a reference solution.
We choose two types of the initial condition $g$ from \cite{G07}, the ``Gaussian'' initial condition given by:
\[
  g_1(\bsx)
  :=
  \left(\frac{2}{\pi \tsc}\right)^{d/4}\exp\left(-\frac{\left(2\pi x_1 - \frac{3\pi}{2}\right)^2 + \sum_{j=2}^{d}\left(2 \pi x_j-\pi\right)^2}{\tsc}\right)\frac{1}{c_{1}},
\]
and the ``Hat'' initial condition given by:
\[
  g_2(\bsx)
  :=
  \left(\frac{3}{\pi \sqrt{\tsc}}\right)^{d/2}   \left(1-\frac{2}{\pi \sqrt{\tsc}} \left|2 \pi x_1 - \frac{3\pi}{2}\right|\right) \; \prod_{j=2}^{d}\left(1-\frac{2}{\pi \sqrt{\tsc}}\left|2 \pi x_j - {\pi}\right|\right)\frac{1}{c_{2}},
\]
for $\bsx\in [0,1)^d$ where $c_{1}$ and $c_{2}$ are normalizing constants to make the $L_2$ norms of both functions equal to 1. We remark that in \cite{G07}, the domain was erroneously stated as $[-\pi,\pi]^d$ which would be equivalent to $[-1/2,1/2)^d$ in our setting. However, we conclude that the actual calculation was done in $[0,2\pi]^d$, as can be confirmed by the fact that the calculated norm of the Gaussian function was 1 in \cite[Figure~6.8]{G07} therein,
and the fact that the same author has exactly the same result in another paper \cite{G07_1} where the domain is stated as $[0,2\pi]^d$ with the same Gaussian initial condition, which corresponds to $[0,1)^d$ in our case.
Therefore we conclude that our experiment is the same experiment as in \cite{G07}.
For the potential function $v$, we consider a ``smooth'' potential function
\[
  v_1(\bsx)
  =
  \prod_{i=1}^d (1-\cos(2\pi x_j))
  ,
\]
and a ``harmonic'' potential function
\[
  v_2
  =
  \frac{1}{2}\sum^d_{j=1}(2\pi x_j-\pi)^2
  .
\]
 
To show the time discretization error $\|u_a(\bsx,t)-u_a^m (\bsx)\|_{L_2}$ at time $t=m\,\Deltat=1$ being fixed, we calculate a reference solution $u^{M}_a(\bsx)$ with the finest time step size $\Deltat=1/M=1/10000$, as an approximation of $u_a(\bsx,t)$. Then we calculate $u^{m}_a(\bsx)$ with various time step sizes $\Deltat=1/m=1/5,...,1/1000$ to be able to plot the convergence rate of $\|u^M_a(\bsx)-u_a^{m}(\bsx)\|_{L_2}$.

The result is exhibited in Figures \ref{timestep1} and~\ref{timestepHigh}.
We observe that the convergence rate for our new method consistently shows second order convergence $\mathcal{O}((\Deltat)^{2})$.
On the other hand the sparse grid results from \cite{G07} do not; for instance, see the case $d=3$ with $\tsc=0.01$.
We remark that the initial condition $g_1$ combined with the potentials $v_1$ and $v_2$ satisfy the conditions of Lemma~\ref{lem:u-series} and Theorem~\ref{thm:grobalerror}.
Therefore we expect to see second order convergence in those cases. However, the hat initial condition $g_2$ does not satisfy the required regularity, nevertheless we have second order convergence in all cases.
Moreover, our method achieves the second order convergence consistently even for high-dimensional cases, going from $d=4$ in Figure~\ref{timestep1} up to $d=12$ in Figure~\ref{timestepHigh}.
We note that for $d=10$ and $d=12$ the convergence graph for the potential $v_1$ does show some irregular behaviour. This comes from the numerical exuberance of the function $v_1$ itself when the dimension is high;
the function rapidly increases to $2^d$ when the position $\bsx$ is close to $(1/2,\ldots,1/2)$. This phenomenon does not happen with the harmonic potential $v_2$, which is more relevant for physics applications.

\begin{figure}%[H]
\centering

  \includegraphics{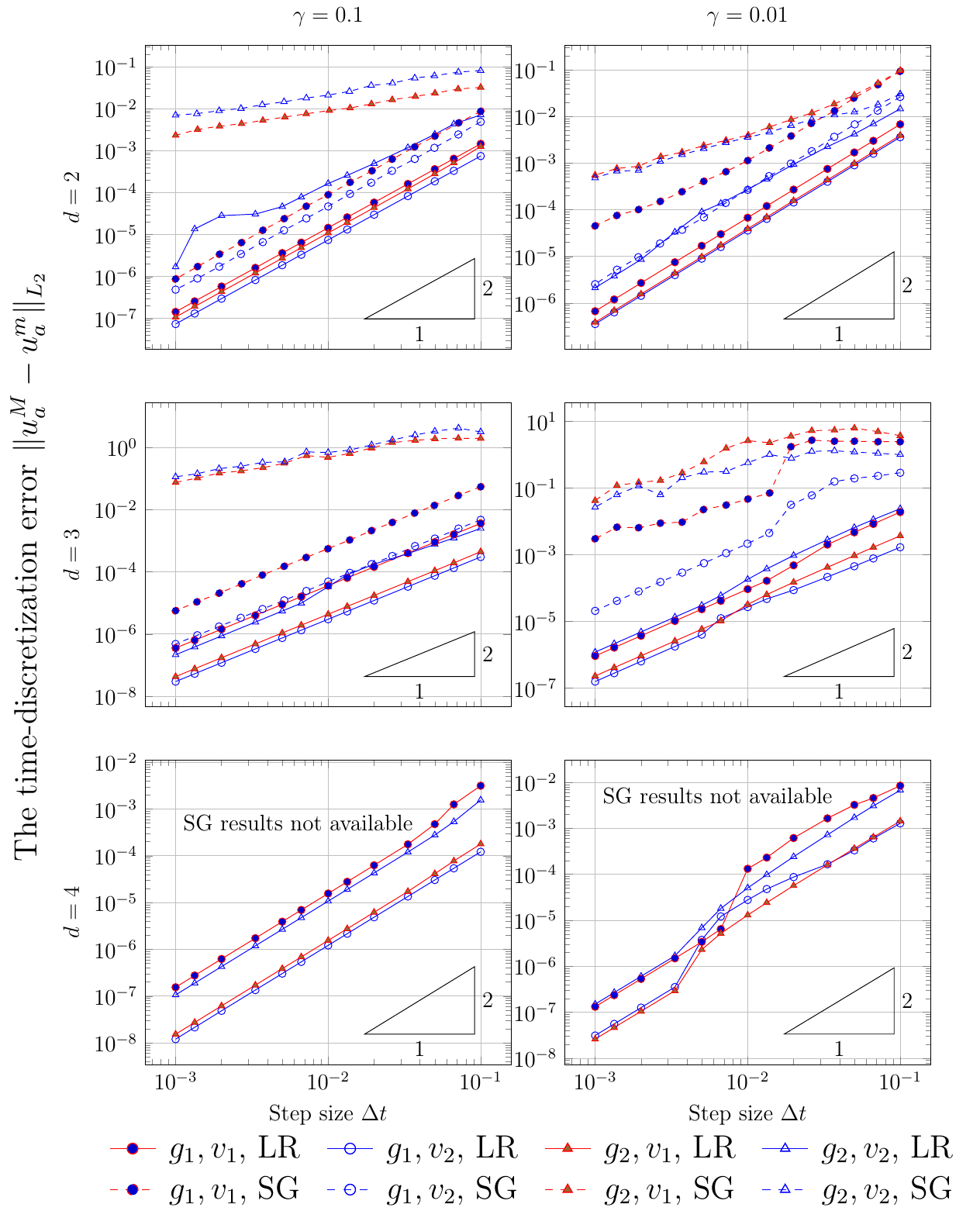}

 \caption{The time-discretization error. Our method (LR) is presented by the solid line, and the results by sparse grid (SG) from \cite{G07} by the dotted line. Note that the initial condition $g_2$ does not satisfy the regularity condition.}
 \label{timestep1}
\end{figure}

\begin{figure}%[H]
\centering

  \includegraphics{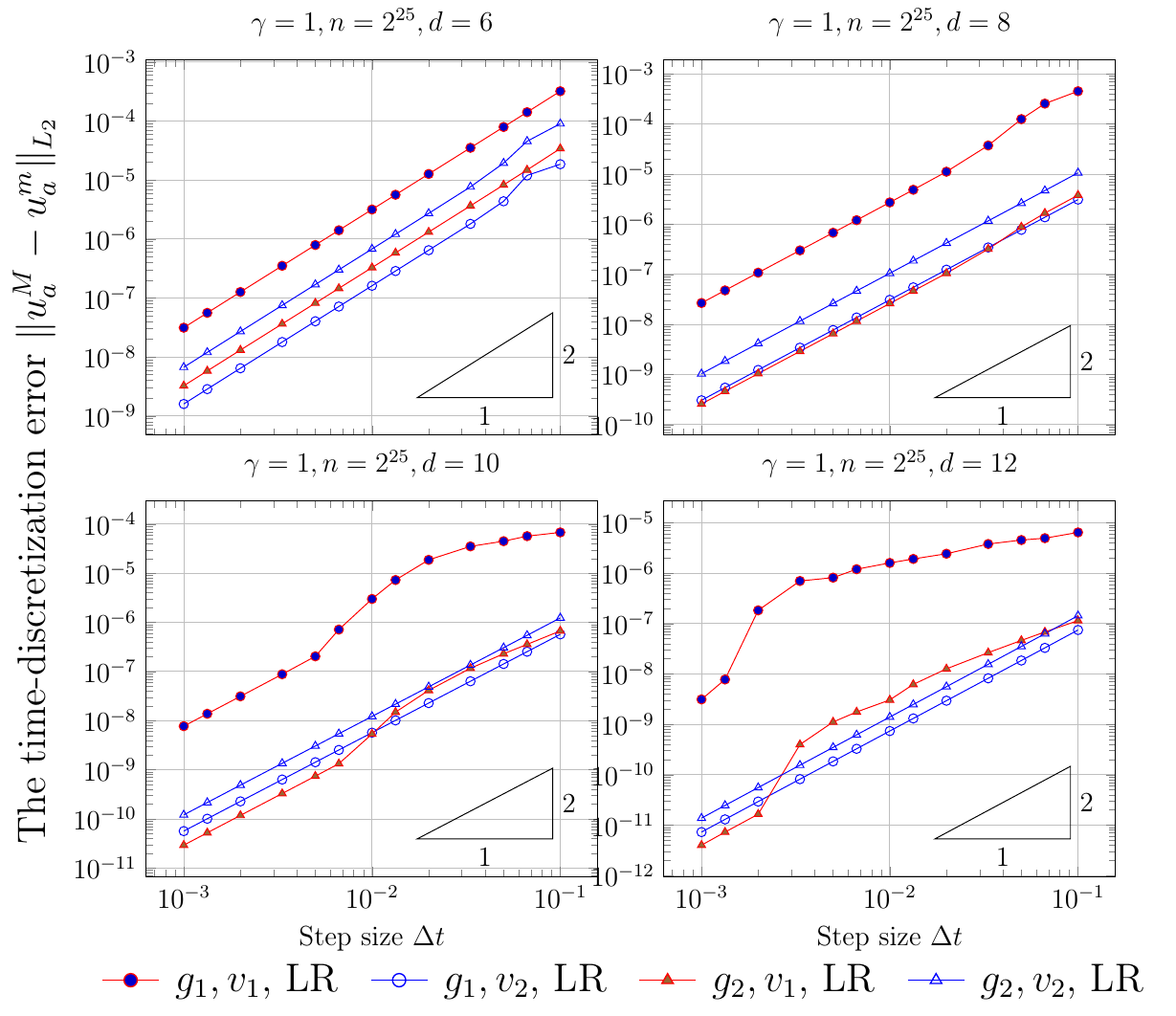}

\caption{The time-discretization error in high-dimensional cases. Results by sparse grid \cite{G07} is not available for these higher-dimensional cases. }
 \label{timestepHigh}
 \end{figure}

\subsection{Norm and energy conservation}\label{subsec:conservation}

The TDSE, as a physical system, needs to conserve the norm and energy of the system.
To test our algorithm we look at how well these quantities are preserved numerically.
Denote the Hamiltonian by $H := -\frac{1}{2} \tsc\, \nabla^2 +\frac{1}{\tsc}v$, then $\frac{\partial u}{\partial t}=-\imagunit H u.$
We study the time evolution of the $L_2$ norm of the wave function $\|u_a\|_{L_2}$ and the energy
 $\langle{Hu_a},{u_a}\rangle_{L_2}$, where $\langle\,,\,\rangle_{L_2}$ denotes the Hermitian inner product in the $L_2$ space. 
These two quantities are supposed to be conserved over the time period since
\[ 
  \frac{\partial}{\partial t} \langle{u},{u}\rangle_{L_2}
  =
  \langle -\imagunit Hu, u\rangle_{L_2} + \langle u, -\imagunit Hu \rangle_{L_2}
  =
  0
  ,
\]
and 
\[
  \frac{\partial}{\partial t}  \langle Hu, u \rangle_{L_2} 
  =
  \langle -\imagunit Hu, Hu \rangle_{L_2} + \langle Hu, -\imagunit Hu\rangle_{L_2}
  =
  0
  ,
\]
for the self-adjoint Hamiltonian $H$. For the self-adjointness of the Hamiltonian, we refer to \cite{MR0493420}.
Our numerical results are presented in Figure~\ref{fig:conservation}.
To compare with the result from \cite{G07}, we traced the graph therein, but we also need to remark that the absolute value in there was not accurate; the axis of the graph in \cite{G07} is not informative enough for this purpose.
However, since the value of $(\mathrm{max}-\mathrm{min})/\mathrm{mean}$ was exhibited in the article, we can compare the variation. Therefore, we plot the time-evolution of the norm and the energy where the initial values are adjusted to zero.

\begin{figure}%[H]
\centering

  \includegraphics{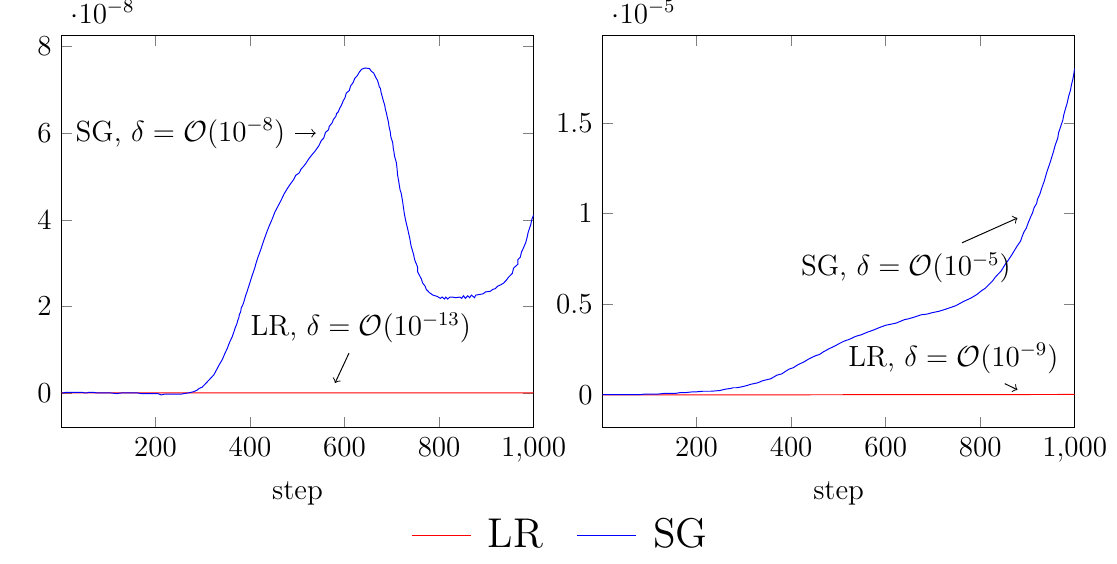}

    \caption{Variation of the norm (left) and the energy (right) for $\tsc=0.5,d=5$ with $g_1$ and $v_2$.
    }
\label{fig:conservation}
\end{figure}

In Figure~\ref{fig:conservation} we see the two quantities are conserved much more accurately using our algorithm than when using the sparse grid approach in \cite{G07}.
We calculate the quantity $\delta := (\mathrm{max}-\mathrm{min})/\mathrm{mean}$ to give an indication of the variation.
Our method conserves more accurately than the sparse grid approach, for the norm conservation we have a factor of $10^{-5}$ smaller variation and for the energy conservation we have a factor of $10^{-4}$.
The reason of the stability of our method is coming from the unitarity of the Fourier transform on our lattice points.
Due to unitarity, the potential operator in the frequency domain, $F_\bsn V_\bsn F^{-1}_\bsn$, becomes Hermitian. Therefore the operator matrix $F_\bsn V_\bsn F^{-1}_\bsn+D_\bsn$ is also Hermitian and hence the spectral theorem tells us that the eigenvalues of the operator matrix are all real.
Finally, the time evolution operator is norm and energy conserving, i.e., $\| \rme^{ -\frac{\imagunit}{\tsc} W_\bsn \, t   - \frac{\imagunit\tsc}{2} D_\bsn \, t } \|_2 = 1$.
In contrast, the Fourier transform on the sparse grid in \cite{G07} is not unitary.
The lack of unitarity can lead to numerical issues and can even lead to have the exponential error growth, instead of linear, in time \cite[Section~III.1.4]{MR2474331}. 

\subsection{Discussion on the the initial discretization}

Here we study the initial error which is caused by the initial discretization in space. 
The total mean square error of the initial (spatial) discretization is given by
\begin{align*}
 e^2_{\text{total}} 
 &=
 \|g-g_{a}\|_{L_2}^2 \\
  &=
  {\displaystyle\int_{[0,1]^s}\bigg|\sum_{\bsh \in \Z^d}\widehat{g}(\bsh)\,\exp(\twopii\bsh\cdot\bsx)-\sum_{\bsh \in \calA(\bsz,n)}\widehat{g}_a(\bsh)\,\exp(\twopii\bsh\cdot\bsx)\bigg |^2 \rd{\bsx}} \\
  &=
  {\displaystyle\sum_{\bsh \in \Z^d\setminus \calA(\bsz,n)}|\widehat{g}(\bsh)|^2} + {\displaystyle\sum_{\bsh \in \calA(\bsz,n)}|\widehat{g}(\bsh)-\widehat{g}_a(\bsh)|^2}.
\end{align*}

\begin{figure}%[H]
\centering

  \includegraphics{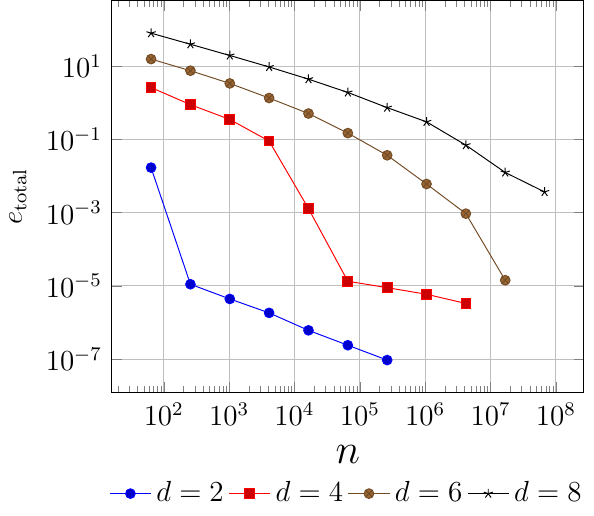}

\caption{The initial discretization error $e_{\text{total}}$ for $\tsc=1$ with Gaussian initial condition $g_1$. }
\label{initial}
\end{figure}
We plot the error $e_{\text{total}}$ in Figure~\ref{initial} with different dimensionality for the Gaussian initial condition. Approximating functions still requires many basis functions when the dimension becomes higher. However, intuitively we might argue that our way of choosing the basis functions according to the $\ell_2$ distance works well particularly for the Gaussian initial condition since the magnitude of the Fourier coefficients of a Gaussian is also a Gaussian (i.e., only depends on the $\ell_2$ norm of the frequency, and decays exponentially fast).

\section{The total error of full discretization}\label{sec:4}
The total error of the method is coming from the discretization both in space and time.
Here we recall our notation for approximating the solution:
\begin{enumerate}
 \item $u(\bsx,t)$ is the true solution of \eqref{TDSE};
 \item $u_a (\bsx,t)$ is the spatially discretized solution including the dynamics as \eqref{ode};
 \item $u_a^k(\bsx)$ is the fully discretized solution with Strang splitting \eqref{StrangTDSE}.
\end{enumerate}
First we denote by $\mathcal{I}_{\bsn}$ the interpolation operator on the lattice points, for a function $f$,
\[
 \mathcal{I}_\bsn (f) (\bsx,t) 
 :=
 \sum_{\bsh\in \calA({\bsZ,\bsn})} \widehat{f}_a(\bsh,t) \, \exp(\twopii\bsh\cdot\bsx)
\]
where,
\[
  \widehat{f}_a(\bsh,t)
  :=
  {\frac1n}
  \sum_{\bsp\in \Lambda({\bsZ,\bsn})}
    f(\bsp,t) \, \exp(-\twopii\bsh\cdot \bsp).        
\]
By using the interpolation operator, we can bound
\[
\|u(\cdot,t)-u_a (\cdot,t)\|_{L_2} 
\le
\|u(\cdot,t)-\mathcal{I}_{\bsn} (u) (\cdot, t)\|_{L_2} + \|\mathcal{I}_{\bsn} (u) (\cdot, t)-u_a (\cdot,t)\|_{L_2}. 
\]
The error $\|u_a (\cdot,t)-u_a^k(\cdot)\|_{L_2}$ is already bounded by Theorem~\ref{thm:grobalerror}. 
Using the triangle inequality we can then bound the total error.
\begin{theorem}[Total error]\label{thm:total}
Given a rank-$r$ lattice $\Lambda(\bsZ,\bsn)$ in canonical form with the number of points $n=\prod_{i=1}^r n_i$, and a TDSE with a potential function $v \in E_\alpha(\T^d)$ with $\alpha \ge 9/2$ and an initial condition $g \in E_\beta(\T^d)$ with $\beta \ge 2$.
Let $D = \tfrac{\tsc}2 D_\bsn$ and $W = \frac1\tsc W_\bsn$ with $D_\bsn$ and $W_\bsn = F_\bsn V_\bsn F^{-1}_\bsn$ as defined in~\eqref{eq:Dn_r} and~\eqref{eq:Wn_r}, and with $V_\bsn$ as defined in~\eqref{eq:Vn_r} using the potential function $v$.

If the anti-aliasing set 
$\calA(\bsZ,\bsn) = \{ \bsh_{\bsxi} \in \Z^d : \bsZ^{\top}\bsh_{\bsxi} \equiv \bsxi \pmod{\bsn} \text{ for } \bsxi \in \Z_{n_1} \oplus \cdots \oplus \Z_{n_r} \}$,
with full cardinality, is chosen such that each $\bsh_{\bsxi}$ with $\bsxi \in \Z_{n_1} \oplus \cdots \oplus \Z_{n_r}$ has minimal $\ell_2$ norm, i.e.,
\begin{align}\label{eq:minimal_r}
  \| \bsh_{\bsxi} \|_2
  =
  \min_{\bsh' \in A(\bsZ,\bsn,\bsxi)} \|\bsh'\|_2
  ,
\end{align}
with
\begin{align*}
  A(\bsZ,\bsn,\bsxi)
  :=
  \bigl\{ \bsh \in \Z^d : \bsZ^{\top}\bsh  \equiv \bsxi \pmod{\bsn} \bigr\}
  ,
\end{align*}
then the following bound holds:
\begin{align*}
&\|u(\cdot,t)-u_a ^k (\cdot)\|_{L_2} \\
&\le
\|u(\cdot,t)-\mathcal{I}_{\bsn} (u) (\cdot, t)\|_{L_2} + \|\mathcal{I}_{\bsn} (u) (\cdot, t)-u_a (\cdot,t)\|_{L_2}
+ \|u_a (\cdot,t)-u_a ^k (\cdot)\|_{L_2}  \\
& \le
 2 \sum_{\bsh \in \Z^d\setminus \calA(\bsZ,\bsn)}| \widehat{u}(\bsh,t) | + \frac{t\tsc}{2}\max_{0 \le t'\le t}\sum_{\bsh \in \Z^d\setminus \calA(\bsZ,\bsn)} \|\bsh\|_2^2 \; | \widehat{u}(\bsh,t') |\\
&\kern4em+ (\Deltat)^2 \; C_2 t  \max_{0\le t'\le t} \|(D+I)^2 \, \widehat{\bsu}_{t'}\|_2,
\end{align*}
where $C_2$ is a constant independent of $n$, $k$ and $\Deltat$.
\end{theorem}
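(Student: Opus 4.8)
The bound to establish is exactly the triangle-inequality split already displayed in the statement, so my plan is to treat the three summands separately. The third summand $\|u_a(\cdot,t)-u_a^k(\cdot)\|_{L_2}$ is handled verbatim by Theorem~\ref{thm:grobalerror}(ii), which applies because $\alpha\ge 9/2$ and which supplies the $(\Deltat)^2$ term. The real work is therefore to produce the two spatial contributions, and for both I would first reduce each $L_2$ norm to the Wiener-algebra norm via $\|w\|_{L_2}\le\|w\|_{A(\T^d)}$ (valid since $\sum|\widehat w|^2\le(\sum|\widehat w|)^2$) and then exploit the coset decomposition~\eqref{eq:disjunct} of $\Z^d$ induced by $\calA(\bsZ,\bsn)$. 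The key combinatorial fact I would use repeatedly is that, since $\calA(\bsZ,\bsn)$ has full cardinality, it contains exactly one representative of each coset of $\Lambda^\bot(\bsZ,\bsn)$, so by~\eqref{eq:disjunct} the map $(\bsh,\bsh')\mapsto\bsh+\bsh'$ is a bijection from $\calA(\bsZ,\bsn)\times(\Lambda^\bot(\bsZ,\bsn)\setminus\{\bszero\})$ onto $\Z^d\setminus\calA(\bsZ,\bsn)$.

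For the first term, $u(\cdot,t)-\mathcal I_\bsn(u)(\cdot,t)$ has Fourier coefficient $\widehat u(\bsh,t)$ on every $\bsh\notin\calA(\bsZ,\bsn)$ and $\widehat u(\bsh,t)-\widehat u_a(\bsh,t)$ on $\bsh\in\calA(\bsZ,\bsn)$. Bounding its $L_2$ norm by its $A$-norm splits it into a pure truncation sum $\sum_{\bsh\notin\calA(\bsZ,\bsn)}|\widehat u(\bsh,t)|$ and an aliasing sum $\sum_{\bsh\in\calA(\bsZ,\bsn)}|\widehat u(\bsh,t)-\widehat u_a(\bsh,t)|$. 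Inserting the aliasing identity of Lemma~\ref{lem:aliasing}, $\widehat u(\bsh,t)-\widehat u_a(\bsh,t)=-\sum_{\bszero\ne\bsh'\in\Lambda^\bot}\widehat u(\bsh+\bsh',t)$, and applying the triangle inequality bounds the aliasing sum by $\sum_{\bsh\in\calA(\bsZ,\bsn)}\sum_{\bszero\ne\bsh'\in\Lambda^\bot}|\widehat u(\bsh+\bsh',t)|$; the bijection above collapses this to $\sum_{\bsh\notin\calA(\bsZ,\bsn)}|\widehat u(\bsh,t)|$. Adding the two pieces produces the factor $2$ in front of the first term.

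For the second term I would compare the ODEs satisfied by the two coefficient vectors, since $\mathcal I_\bsn(u)$ and $u_a$ share support on $\calA(\bsZ,\bsn)$ and the same initial data $\widehat\bsg_a$. Let $\widehat{\boldsymbol U}_t$ collect the aliased exact coefficients $\widehat U_\xi(t)=\sum_{\bsh'\in\Lambda^\bot}\widehat u(\bsh_\xi+\bsh',t)$ (the coefficients of $\mathcal I_\bsn(u)$) and let $\widehat\bsu_t$ solve~\eqref{ode}. Summing~\eqref{eq:Fourier-ODE} over each coset shows $\widehat{\boldsymbol U}_t$ solves the same system up to a defect in which the potential coupling matches \emph{exactly}: by the interpolation property (Theorem~\ref{thm:properties}(ii)) $v\cdot\mathcal I_\bsn(u)$ and $v\cdot u$ agree on the lattice points, so $W_\bsn\widehat{\boldsymbol U}_t$ reproduces the aliased coefficients of $f=uv$, and the only mismatch is the Laplacian term, where~\eqref{ode} carries $\|\bsh_\xi\|_2^2$ while the exact dynamics carries $\|\bsh_\xi+\bsh'\|_2^2$ on each aliased mode. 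The residual thus has entries $r_\xi(t)=\sum_{\bszero\ne\bsh'\in\Lambda^\bot}(\|\bsh_\xi+\bsh'\|_2^2-\|\bsh_\xi\|_2^2)\,\widehat u(\bsh_\xi+\bsh',t)$. Setting $\boldsymbol\epsilon_t=\widehat{\boldsymbol U}_t-\widehat\bsu_t$ with $\boldsymbol\epsilon_0=\bszero$, Duhamel's formula together with the $\ell_2$-unitarity of the semidiscrete propagator $\rme^{-\frac{\imagunit}{\tsc}W_\bsn t-\frac{\imagunit\tsc}{2}D_\bsn t}$ (established in the proof of Theorem~\ref{thm:grobalerror}) yields $\|\boldsymbol\epsilon_t\|_2\le t\max_{0\le t'\le t}\|\boldsymbol r(t')\|_2$ up to the scalar factor coming from $\tfrac{\tsc}{2}D_\bsn$. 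Finally, the minimal-$\ell_2$-norm choice of $\bsh_\xi$ gives $0\le\|\bsh_\xi+\bsh'\|_2^2-\|\bsh_\xi\|_2^2\le\|\bsh_\xi+\bsh'\|_2^2$, so $\|\boldsymbol r(t')\|_2\le\|\boldsymbol r(t')\|_1\le\sum_\xi\sum_{\bszero\ne\bsh'}\|\bsh_\xi+\bsh'\|_2^2|\widehat u(\bsh_\xi+\bsh',t')|$, and the same bijection collapses this to $\sum_{\bsh\notin\calA(\bsZ,\bsn)}\|\bsh\|_2^2|\widehat u(\bsh,t')|$, giving the middle term. I expect the crux to be this second term: verifying that the potential part contributes no defect (which hinges on the interpolation identity) and controlling the unitarity and constant in the Duhamel step, with the minimal-norm property being precisely what makes the residual summable into $\Z^d\setminus\calA(\bsZ,\bsn)$.
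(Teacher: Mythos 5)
Your proposal is correct and is, in substance, the paper's own proof: the same three-term split, the same truncation-plus-aliasing computation giving the factor $2$ (the paper bounds the aliasing part by $\ell_2\le\ell_1$ rather than passing through the Wiener norm, but the count is identical), the same invocation of Theorem~\ref{thm:grobalerror} for the time-stepping term, and, for the middle term, the same identification of the defect as the commutator of interpolation with the Laplacian, with the potential contributing no defect because of the interpolation identity $\mathcal{I}_{\bsn}(v\,u)=\mathcal{I}_{\bsn}(v\,\mathcal{I}_{\bsn}(u))$, the minimal-norm property~\eqref{eq:minimal_r} bounding the defect coefficients, and the coset decomposition~\eqref{eq:disjunct} collapsing the double sum. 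The only divergence is one of formulation: you compare the coset-summed exact dynamics~\eqref{eq:Fourier-ODE} with the semi-discrete system~\eqref{ode} in the frequency domain and close the estimate with Duhamel's formula plus $\ell_2$-unitarity of $\rme^{-\frac{\imagunit}{\tsc}W_\bsn t-\frac{\imagunit\tsc}{2}D_\bsn t}$, whereas the paper works in physical space, deriving an evolution equation for $\theta=\mathcal{I}_{\bsn}(u)-u_a$ and integrating the differential inequality $\partial_t\|\theta(\cdot,t)\|_{L_2}\le\|\delta_\bsn(\cdot,t)\|_{L_2}$; since both functions have Fourier support in $\calA(\bsZ,\bsn)$, Parseval makes these literally the same estimate, resting on the same Hermiticity of $\tfrac{\tsc}{2}D_\bsn+\tfrac1\tsc W_\bsn$. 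One bookkeeping remark: if you carry the constants through your Duhamel step, the residual enters through $D_\bsn$ and hence carries the factor $4\pi^2$ from $\nabla^2\exp(\twopii\bsh\cdot\bsx)=-4\pi^2\|\bsh\|_2^2\exp(\twopii\bsh\cdot\bsx)$, so you would obtain $2\pi^2\tsc\,t$ in front of the middle term rather than the stated $\tsc t/2$; the paper's computation of $\delta_\bsn$ silently drops this same factor, so this is a flaw in the stated constant shared with the paper, not a gap in your argument.
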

\begin{proof}
 To show the error, we follow a similar way of the proof for \cite[Theorem 1.8]{MR2474331} where the one-dimensional pseudo-spectral Fourier method for the TDSE is analyzed.
 Applying the interpolation operator to \eqref{TDSE} on both sides, we have
\begin{align}\label{eq:tdse_intp}
      \frac{\partial \mathcal{I}_{\bsn} (u) (\bsx, t)}{\partial t}
    &=
    \frac{ \tsc \imagunit }{2} \, \mathcal{I}_{\bsn} (\nabla^2 u) (\bsx, t) - \frac{\imagunit}{\tsc} \mathcal{I}_{\bsn} ( v \, u )(\bsx,t) \nonumber \\
    &= \frac{ \tsc \imagunit}{2} \, (\nabla^2  \mathcal{I}_{\bsn} (u) ) (\bsx, t)  - \frac{\imagunit}{\tsc} \mathcal{I}_{\bsn} ( v \,  (\mathcal{I}_{\bsn}(u) ) ) (\bsx,t) + \delta_{\bsn}(\bsx, t),
\end{align}
where $\delta_{\bsn} (\bsx, t)=\frac{ \tsc \imagunit }{2} \, \mathcal{I}_{\bsn} (\nabla^2 u(\bsx, t)) - \frac{ \tsc \imagunit}{2} \, (\nabla^2  \mathcal{I}_{\bsn} (u) (\bsx, t))$ is called the \emph{defect} which can be seen as a commutator of the interpolation operator and the Laplacian applied to the solution,
and we used $\mathcal{I}_{\bsn} ( v \, u )(\bsx,t) = \mathcal{I}_{\bsn} ( v \,  (\mathcal{I}_{\bsn}( u ) ) ) (\bsx,t)$.
At the same time, we can express the dynamics of $\widehat{u}_a(\bsh,t)$ given in \eqref{ode} in the original space by 
\begin{equation}\label{ode_original}
   \frac{\partial u_a (\bsx, t)}{\partial t}
    =
    \frac{ \tsc \imagunit}{2} \, \nabla^2 u_a(\bsx, t) - \frac{\imagunit}{\tsc} \mathcal{I}_{\bsn}(v \, u_a)(\bsx,t)
    .
\end{equation}
Here we see two different dynamics in \eqref{eq:tdse_intp} and \eqref{ode_original}, therefore, by letting $\theta_\bsn (\bsx,t):=\mathcal{I}_{\bsn} ( u) (\bsx, t) -  u_a(\bsx, t)$ and comparing \eqref{eq:tdse_intp} with \eqref{ode_original}, we have
\begin{equation}\label{eq:theta_}
 \frac{\partial  \theta (\bsx, t)}{\partial t}
     =
    \frac{ \tsc \imagunit}{2} \,  \nabla^2 \theta (\bsx, t) - \frac{\imagunit}{\tsc} \mathcal{I}_{\bsn} ( v \, \theta )(\bsx,t) +\delta_\bsn (\bsx, t).
\end{equation}
We note that $\theta(\bsx,0)=0$. Using the relation 
\[\frac{1}{2}\frac{\partial  |\theta (\bsx, t)|^2}{\partial t}
=
\operatorname{Re}\left( \theta(\bsx,t) \overline{\frac{\partial \theta (\bsx, t)}{\partial t}} \right),
\]
where $\overline{x}$ denotes the complex conjugate, and using the chain rule we obtain the following inequality
\begin{align*}
 \|\theta(\cdot,t)\|_{L_2} \frac{\partial  \| \theta (\cdot, t) \|_{L_2} }{\partial t} 
 &=
 \frac{1}{2} \frac{\partial  \| \theta (\cdot, t) \|_{L_2}^2 }{\partial t}  
 =
 \operatorname{Re} \left( \biggl \langle \theta (\cdot, t), \frac{\partial \theta (\cdot,t)}{\partial t}  \biggr\rangle_{L_2} \right) \\
 &\kern-7em =
 \operatorname{Re} \left( \biggl \langle \theta (\cdot, t),   \frac{ \tsc \imagunit}{2} \,  \nabla^2 \theta (\cdot, t) - \frac{\imagunit}{\tsc} \mathcal{I}_{\bsn} ( v \, \theta )(\cdot,t)   \biggr\rangle_{L_2} \right) 
 + \operatorname{Re} \left( \biggl \langle \theta (\cdot, t),   \delta_\bsn (\cdot, t)  \biggr\rangle_{L_2} \right) \\
 &= 
 \operatorname{Re} \left( \biggl \langle \theta (\cdot, t),    \delta_\bsn (\cdot, t)  \biggr\rangle_{L_2} \right) 
 \le   
 \|\theta(\cdot,t)\|_{L_2}   \|\delta_{\bsn}(\cdot,t)\|_{L_2},
\end{align*}
where we used the fact that our discrete Fourier matrix $F_{\bsn}$ is unitary which makes the operator $ \frac{\tsc \imagunit}{2} \nabla^2 (\cdot) - \frac{\imagunit}{\tsc} \mathcal{I}_{\bsn} ( v \, \; (\cdot) )(\bsx,t)$ (e.g., \eqref{ode} and \eqref{ode_original}) self-adjoint,
and consequently the energy $ \biggl \langle \theta (\cdot, t),   \frac{ \tsc }{2} \,  \nabla^2 \theta (\cdot, t) - \frac{1}{\tsc} \mathcal{I}_{\bsn} ( v \, \theta )(\cdot,t)   \biggr\rangle_{L_2}$ is always real. 
Dividing both side of the above inequality by $\|\theta(\cdot,t)\|_{L_2}$ and integrating over time, we obtain
\begin{equation*}
\int_0^t \frac{\partial  \| \theta (\cdot, t') \|_{L_2}}{\partial t} \rd t' =\|\theta (\cdot,t)\|_{L_2} \le  \int _0 ^t  \| \delta_\bsn  (\cdot,t' ) \|_{L_2} \rd t'.
\end{equation*}
Using Lemma~\ref{lem:aliasing}, we can explicitly calculate the defect
\begin{align*}
\delta_{\bsn} (\bsx, t) &= \frac{ \tsc \imagunit}{2} \, \mathcal{I}_{\bsn} (\nabla^2 u(\bsx, t)) - \frac{ \tsc \imagunit}{2} \, ( \nabla^2  \mathcal{I}_{\bsn} (u) (\bsx, t) ) \\
&=
 \frac{ \tsc \imagunit}{2} \,  \sum_{\bsh \in \calA({\bsZ,\bsn})} \left(\sum_{\bsell\in  \Lambda^\bot(\bsZ,\bsn)} \kern-1em(\|\bsh+\bsell  \|_2 ^2 - \|\bsh\|_2^2) \; \widehat{u}(\bsh+\bsell,t)\right) \exp(\twopii\bsh\cdot\bsx)  .  
\end{align*}
For $\bsell=\bszero$, all terms become zero and we drop those. Now we use \eqref{eq:minimal_r} such that $| \; \|\bsh+\bsell\|_2^2-\|\bsh\|_2^2 \;| \le \|\bsh+\bsell\|_2^2$ for any $\bsh \in \calA(\bsZ,\bsn)$ and $\bsell\in  \Lambda^\bot(\bsZ,\bsn)$. This means
\begin{align*}
\| \delta_{\bsn} (\cdot, t) \|_{L_2} 
&\le
\frac{ \tsc}{2}  \left( \sum_{\bsh \in \calA({\bsZ,\bsn})} \left(\sum_{\bszero \ne \bsell\in  \Lambda^\bot(\bsZ,\bsn)} \kern-1em \|\bsh+\bsell  \|_2 ^2 \;|\widehat{u}(\bsh+\bsell,t)| \right)^2  \right)^{1/2} \\
&\le
\frac{ \tsc}{2}   \displaystyle\sum_{\bsh \in \Z^d\setminus \calA(\bsZ,\bsn)} \kern-1em \|\bsh \|_2 ^2 \; | \widehat{u}(\bsh,t)| .% \\
\end{align*}
Therefore, we have 
\[
 \|\theta (\cdot,t)\|_{L_2} \le \int _0 ^t  \| \delta_\bsn  (\cdot,t' ) \|_{L_2} \rd t' 
 \le
 t \max_{0 \le t'\le t} \frac{ \tsc}{2}   \sum_{\bsh \in \Z^d\setminus \calA(\bsZ,\bsn)} \kern-1em \|\bsh \|_2 ^2 \; | \widehat{u}(\bsh,t')| .
\]
For the remaining term $\|u(\cdot,t)-\mathcal{I}_{\bsn} ( u) (\cdot, t)\|_{L_2}$, we have
\begin{align*}%\label{eq:total}
  \|u(\cdot,t)-\mathcal{I}_{\bsn} (u) (\cdot, t)\|_{L_2} \nonumber \\
  &\kern-4em=
  \left({\displaystyle\sum_{\bsh \in \Z^d\setminus \calA(\bsz,n)}|\widehat{u}(\bsh,t)|^2} + {\displaystyle\sum_{\bsh \in \calA(\bsz,n)} \left| \sum_{\bszero \ne \bsell\in  \Lambda^\bot(\bsZ,\bsn)} \kern-1em \widehat{u}(\bsh+\bsell,t) \right|^2}\right)^{1/2}\\
   &\kern-4em\le
  \left({\displaystyle\sum_{\bsh \in \Z^d\setminus \calA(\bsz,n)}\kern-1em  | \widehat{u}(\bsh,t)|^2}\right)^{1/2} + \left({\displaystyle\sum_{\bsh \in \calA(\bsz,n)} \left| \sum_{\bszero \ne \bsell\in  \Lambda^\bot(\bsZ,\bsn)} \kern-1em \widehat{u}(\bsh+\bsell,t) \right|^2}\right)^{1/2}\\
     &\kern-4em\le
  {\displaystyle\sum_{\bsh \in \Z^d\setminus \calA(\bsz,n)}\kern-1em  | \widehat{u}(\bsh,t)|} + \left({\displaystyle\sum_{\bsh \in \calA(\bsz,n)} \left( \sum_{\bszero \ne \bsell\in  \Lambda^\bot(\bsZ,\bsn)} \kern-1em |\widehat{u}(\bsh+\bsell,t)| \right)^2}\right)^{1/2}\\
  &\kern-4em\le
  2{\displaystyle\sum_{\bsh \in \Z^d\setminus \calA(\bsz,n)}\kern-1em  | \widehat{u}(\bsh,t)|}.
  \end{align*}

Using the triangle inequality, we obtain
\begin{align*}
\|u(\cdot,t) - u_a(\cdot,t) \|_{L_2} 
 &\le
\|u(\cdot,t)-\mathcal{I}_{\bsn} ( u) (\cdot, t)\|_{L_2} + \|\mathcal{I}_{\bsn} ( u) (\cdot, t)- u_a(\cdot,t) \|_{L_2} \\
 &\le
 2 \sum_{\bsh \in \Z^d\setminus \calA(\bsZ,\bsn)}| \widehat{u}(\bsh,t) | + \frac{t\tsc}{2}\max_{0 \le t'\le t}\sum_{\bsh \in \Z^d\setminus \calA(\bsZ,\bsn)} \|\bsh\|_2^2 \; | \widehat{u}(\bsh,t') |.
\end{align*}
This completes the proof.
\end{proof}
The above error bound is further bounded by
\[
\|u(\cdot,t) - u_a(\cdot,t) \|_{L_2} \le   (2 + \frac{t\tsc}{2} ) \max_{0 \le t'\le t}\sum_{\bsh \in \Z^d\setminus \calA(\bsZ,\bsn)} \|\bsh\|_2^2 \; | \widehat{u}(\bsh,t') |.
\]
This is similar to the result of \cite[Theorem 1.8]{MR2474331} for the one-dimensional case which states 
\[
\|u(\cdot,t) - u_a(\cdot,t) \|_{L_2} \le C(1+t) \max_{0 \le t'\le t}  \| \frac{\partial^2 u(\cdot,t')}{ \partial x^2}- (\frac{\partial^2  \mathcal{I}_{\bsn} (u) }{ \partial x^2}) (\cdot, t')\|_{L_2}.
\]
For certain function spaces, the approximation errors of lattice points are explicitly known, e.g., \cite{byrenheid2016tight,KSW2006}. It might be possible to construct approximation lattices according to the referenced papers and then to extend the frequency index set to fulfill the needed conditions. 
However, this is not the focus of the present paper. The focus is the interplay between the spatial discretization and the time-stepping error, because the time-stepping error itself is heavily affected by the spatial discretization as we can see from the comparison with \cite{G07}.

\section{Conclusion}\label{sec:5}

We approximated the solution of the time-dependent Schrödinger equation by using rank-$1$ and rank-$r$ lattices for the space discretization and Strang splitting for the time discretization. We combined the anti-aliasing set of the lattices together with FFTs to obtain both theoretical advantages and computational efficiency.
We showed that the time discretization of our method has second-order convergence for a potential function $v \in E_\alpha(\T^d)$ with $\alpha > 9/2$ which is independent of the dimension $d$. 
The numerical experiments confirm the theory.
We observed second order convergence with respect to the time step in cases up to $12$~dimensions.
Previous results based on sparse grids \cite{G07} have difficulty for cases higher than $5$~dimensions.

Here we also remark limitations of our method. We exploited the structure of lattices to mitigate the curse of dimensionality, but we do not completely remove the curse. This means, we can solve rather higher-dimensional problems than regular grids and sparse grids in \cite{G07} can, but not too high.
Also, our focus of the present paper is on the time-dependent problems. The algorithm is especially made for obtaining a small time-stepping error. Therefore, we cannot expect that our method works better for the time-independent problems than existing methods such as \cite{MR3882754, MR2339626}, for this the lattice points have to be constructed with this in mind.

Our method can be applied to different problems which would be more interesting for physics applications. One possibility is the time-dependent non-linear Schr{\"o}dinger equation for simulating Bose--Einstein condensates. 
In \cite{MR3022261}, Thalhammer showed that pseudo-spectral Fourier methods using regular grids with exponential splitting can obtain the higher order convergence in time stepping. We may possibly alternate the regular grid with lattice points to obtain the efficient simulation scheme with keeping the same convergence order.
Another possibility is using our method for time-dependent potentials. For instance, the time-dependent harmonic oscillator is used for considering multiphoton excitation of molecules, see \cite{kosloff1983fourier}.
Our method can also be extended to the higher-order exponential splitting, which is studied in the following up paper \cite{2019arXiv190506904S}.

\section*{Acknowledgments}
We would like to thank two anonymous referees for their valuable comments. We also thank financial supports from the KU Leuven research fund.
We thank Christian Lubich for his valuable comments on the proof of Lemma~\ref{lem:u-series}.

\bibliographystyle{siamplain}
\bibliography{schr}

%\printbibliography

\end{document}